\newcommand{\R}{{\mathbb{R}}}
\newcommand{\cov}{\mathop{\rm cov}}
\newcommand{\ov}{\overline}
\newcommand{\vf}{\varphi}
\newcommand{\cE}{{\mathcal E}}
\newcommand{\mbR}{{\mathbb R}}
\theoremstyle{plain}
\newtheorem{thm}{Theorem}
\newtheorem{lem}[thm]{Lemma}
\newtheorem{example}{Example}[section]
\newtheorem{remark}{Remark}[section]
\theoremstyle{definition}
\newtheorem{defn}[thm]{Definition}
\newtheorem{prop}[thm]{Proposition}
\begin{document}

\begin{center}
\textbf{Clark formula for local time for one class of Gaussian processes}
\end{center}

\paragraph*{Abstract \\ \\}

In the article we present chaotic decomposition and analog of the Clark formula for the local time of Gaussian integrators. Since  the integral with respect to Gaussian integrator is understood in Skorokhod sense, then there exist more than one Clark representation for the local time. We present different representations and discuss the representation with the minimal $L^2$-norm.
\begin{flushleft}
\textit{2010 Mathematics Subject Classification } 60G15, 60H05, 60H40, 60J55 \\

\textit{Key words :} local time, integrator, Clark formula, It\^{o}-Wiener expansion \\
\end{flushleft}
\paragraph*{Authors  }

\begin{flushleft}
   A. A. Dorogovtsev, Institute of Mathematics, National Academy of Sciences of Ukraine, Ukraine \\
\textit{E-mail address:  andrey.dorogovtsev\MVAt gmail.com }
\end{flushleft}

\begin{flushleft}
 O. L. Izyumtseva, Institute of Mathematics, National Academy of Sciences of Ukraine, Ukraine\\
\textit{E-mail address:  olaizyumtseva\MVAt yahoo.com }
\end{flushleft}

\begin{flushleft}
 Georgii Riabov, Institute of Mathematics, National Academy of Sciences of Ukraine, Ukraine\\
\textit{E-mail address:  ryabov.george\MVAt gmail.com } 
\end{flushleft}

\begin{flushleft}
Naoufel Salhi, Faculty of Sciences of Tunis, University of Tunis El Manar, Tunisia \\
\textit{E-mail address:  salhi.naoufel\MVAt gmail.com }
\end{flushleft}

  \paragraph*{Introduction \\ \\}
In this paper we establish some integral representations for the local time of Gaussian integrators. Recall that for the one dimensional Wiener process $ \lbrace w(t), \; 0 \leqslant t \leqslant 1 \rbrace ,$ the local time at a point $u$ up to time $t$ can be formally denoted by $ \displaystyle \int_0^t \delta_u (w(s))ds $ and possess a meaning using approximations of the Dirac delta function. It was proved that the following limit exists  
$$ \int_0^t \delta_u (w(s))ds := L^2 - \lim _{\varepsilon \rightarrow 0^{+}} \int_{0}^{t}p_{\varepsilon}(w(s)-u)ds $$
where $ p_\varepsilon (x)=  (2\pi \varepsilon)^{-1/2}\exp(-x^2/2\varepsilon)  $ \cite{15} .
  
  Another approach is based on the conception of the local time as the density of the occupation measure 
$$ \mu_t (D) = \int_0^t \mathbf{1}_D(w(s))ds \; , \quad D \; \in \; \mathcal{B}(\mathbb{R}) . $$
L\'{e}vy proved in \cite{13}, that, almost surely, $ \mu_t $ is absolutely continuous with respect to Lebesgue measure.  After that, Trotter proved that the density $\ell $ of $ \mu_t $ is continuous in both time and space variables \cite{14} and, hence, coincides with the local time of the Brownian motion. Moreover, we get the following occupation formula  
$$      \int_{\mathbb{R}} \varphi(u)\ell(u,t)du = \int_{0}^{t}\varphi(w(s))ds ,\qquad a.s.  $$
which holds for every bounded and measurable function $\varphi $.

While thinking about local time for other Gaussian processes, S. Berman noticed that the independence of increments and self-similarity were extensively used in the Brownian motion case. Thus, he introduced the notion of local nondeterminism to remedy the lack of these properties for general Gaussian processes \cite{12}. Nonformally, the local nondeterminism signifies that the value of the process at a given time point is relatively unpredictable on the basis of a finite set of observations from the immediate past. S. Berman proved that this condition can assure the existence and smoothness of the local time. But the difficulty to check the local nondeterminism property was a motivation for A.A.Dorogovtsev to introduce a new class of Gaussian processes called by Gausssian integrators \cite{1}. These processes have a simple representation as Wiener integral where the integrand involves some bounded linear operator of the space $ L_2 ([0,1])$ of square integrable functions. This operator appears in the simple expression of the covariance and its invertibility is the key to local nondeterminism, existence and regularity of local time  \cite{7}. In addition, as their name indicates, stochastic integration with respect to integrators can be made. 
The used here integral is the Skorokhod ( or extended ) integral which generalizes the It\^{o}-integral, but does not keep the isometry property. In fact, different integrands can have the same integral. This property makes it possible to obtain different integral representations of the local time of integrators, which is our main purpose. The existence of such representations comes from the Clark formula. This formula states that every square integrable random variable $\alpha $ measurable with respect to the Wiener process $w$ can be uniquely written as 
$$ \alpha = \mathbb{E} \alpha + \int_0^1 x(t)dw(t) $$
where $x$ is a square integrable random element of $ L_2 ([0,1])$ adapted to the Wiener filtration. When the random variable $ \alpha $ is stochastically differentiable, then the process $x$ can be expressed in terms of $\alpha$ through the Clark-Ocone formula \cite{10}. In general case, we prefer obtaining an analogous formula where the integration is made with respect to the integrator itself. This pushs us to use the extended stochastic integral. And, as we mentioned before, we know that the same Skorokhod integral can be obtained from different integrands so that we predict it is possible to state more than one Clark formula for the local time of integrators.

This paper is divided into five sections. First we recall some vocabulary of white noise analysis emphasizing on chaotic decomposition and extended stochastic integral. Second section is devoted to the Clark formula where we state the result in terms of Skorokhod integral and find an optimal (in some sense ) representation called by the minimal norm integral representation. In the third section, Gaussian integrators are introduced, followed by some examples and a characterisation in terms of continuous linear operators and white noise. We devote section 4 to the chaotic expansion of the local time of integrators and the last one to some of its integral representations. 

\section{Preliminary facts from white noise analysis}

In this section we recall some notions of stochastic analysis that will be used along this paper. We focus on the concepts of white noise, chaotic expansion, stochastic derivative, extended stochastic integral and second quantization operators.

Let $H$ be a real separable Hilbert space with the scalar product $(\cdot,\cdot)$ and the norm $|\cdot|.$ We recall the definition of the white noise in $H$ and a few related notions. For the detailed exposition of the theory we refer to \cite{3, 4, 9, 10}. Through all the sections of this paper, we will use the probability space $(\Omega,\mathcal{F},\mathbb{P}).$ 
\begin{flushleft}
By definition, the white noise $\xi$ in $H$ is the collection $ \{(\xi,h),h\in H\} $ of random variables that satisfies following two properties 

\begin{enumerate}

\item[1)] each $(\xi,h)$ is a Gaussian random variable with mean $0$ and variance $|h|^2$,

\item[2)] $h\to (\xi,h)$ is a linear mapping from $H$ into $L^2(\Omega,\mathcal{F},\mathbb{P}).$

\end{enumerate}

The space $L^2(\Omega,\sigma(\xi),\mathbb{P})$ of all square integrable functionals from $\xi$ possess an orthogonal structure, known as the It\^o-Wiener expansion (or, chaos expansion). Denote by $H^{\otimes n}_{s}$ the space of all $n$-linear symmetric Hilbert-Schmidt forms $A_n:H^n\to \mbR.$ $H^{\otimes n}_{s}$ is a separable Hilbert space relatively to the scalar product
$$
(A_n,B_n)_n = \sum_{k_1,\ldots,k_n}A_n(e_{k_1},\ldots,e_{k_n})B_n(e_{k_1},\ldots,e_{k_n}),
$$
where $(e_k)$ is an orthonormal basis in $H.$ By $\|\cdot\|_n$ we denote norm in $H^{\otimes n}_s.$ Each form $A_n\in H^{\otimes n}_s$ can be written as a series
$$
A_n=\sum_{k_1,\ldots,k_n}a_{k_1,\ldots,k_n}e_{k_1}\otimes\ldots \otimes e_{k_n}.
$$
Associate to it a random variable 
$$
A_n(\xi,\ldots,\xi)=\sum_{j_1+j_2+\ldots=n}b_{j_1,j_2,\ldots}\prod^\infty_{l=1}H_{j_l}((\xi,e_l))\in L^2(\Omega,\sigma(\xi),\mathbb{P}).
$$
Here $H_k$ is the $k-$th Hermite polynomial $  H_k(x)=(-1)^k e^{ \frac{x^2}{2}}  \left(\frac{d}{dx}\right) ^{k} e^{ -\frac{x^2}{2}}  ,$ and the coefficient $b_{j_1,j_2,\ldots}$ equals 
$$
b_{j_1,j_2,\ldots}=\frac{n!}{j_1!j_2!\ldots}a_{\underbrace{1,\ldots,1}_{j_1}, \underbrace{2,\ldots,2}_{j_2},\ldots}.
$$ 
The random variable  $A_n(\xi,\ldots,\xi)$ is an (infinite-dimensional) Hermite polynomial from $\xi$ of a degree $n.$ Polynomials $A_n(\xi,\ldots,\xi)$ and $B_k(\xi,\ldots,\xi)$ of different degrees are orthogonal. Up to a constant, the correspondence
$$
A_n\to A_n(\xi,\ldots,\xi)
$$
is an isometry
$$
\mathbb{E} A_n(\xi,\ldots,\xi)^2=n! \|A_n\|^2_n.
$$
 The It\^o-Wiener expansion is the representation of $L^2(\Omega,\sigma(\xi),\mathbb{P})$ as an orthogonal sum of spaces of orthogonal polynomials. Each square integrable random variable $\alpha\in L^2(\Omega,\sigma(\xi),\mathbb{P})$ can be uniquely written as a sum
\begin{equation}
\label{x}
\alpha=\sum^\infty_{n=0}A_n(\xi,\ldots,\xi) , \ A_n\in H^{\otimes n}_s.
\end{equation}

The It\^o-Wiener expansion gives one possible way to define stochastic derivative. Consider the random variable $\alpha\in L^2(\Omega,\sigma(\xi),\mathbb{P})$ with the It\^o-Wiener expansion \eqref{x} . 
\begin{defn} \cite{15}
 We say that $\alpha $ is stochastically differentiable, if there exists square integrable $H-$ valued random element $D\alpha\in L^2(\Omega,\sigma(\xi),\mathbb{P};H),$ such that for every $h\in H$
$$
(D\alpha,h)=\sum^\infty_{n=0}nA_n(h,\underbrace{\xi,\ldots,\xi}_{n-1}).
$$
\end{defn}
The operator $D$ is an unbounded closed operator, acting from  $\mbox{Dom}(D)\subset L^2(\Omega,\sigma(\xi),\mathbb{P})$ into $L^2(\Omega,\sigma(\xi),\mathbb{P};H).$ 
\begin{defn} \cite{15}
The extended stochastic integral is the adjoint operator $I=D^*$ (in \cite{10} it is called the divergence operator).
\end{defn}
The operator $I$ is also an unbounded closed operator \cite[\S 1.3]{10}.
\end{flushleft}

Now consider the Wiener process $(w(t))_{t\in[0,1]},$ $w(0)=0.$ It naturally defines a white noise $\xi$ in the Hilbert space $L_2([0,1])$ by the formula
$$ (h, \xi ) = \int_0^1 h(t) dw(t). $$
  Informally, $\xi$ is the derivative of $w.$ Now, $\sigma(\xi)=\sigma(w).$ In this case, if a square integrable random process $y$ belongs to the domain of the extended stochasic integral $I,$ then $I(y)$ can simply be denoted by $\displaystyle \int_0^1 y(t)dw(t) .$ We recall that if the process $y$ is adapted to the Wiener filtration then the integral $I(y)$ coincides with the It\^{o} integral \cite{15}.\\
 
  Now let us finish this part by recalling the definition of a second quantization operator.  Using the white noise $\xi $ generated by the Brownian motion $(w(t))_{t\in[0,1]}$, every square integrable random variable $\alpha$ measurable with respect to $\xi$ can be uniquely represented as a sum
 $$
 \alpha=\sum^\infty_{n=0}A_n(\xi, \ldots, \xi)=\sum^\infty_{n=0}\int_{\Delta_n}a_n(t_1, \ldots, t_n)dw(t_1)\ldots dw(t_n),
 $$
 where $ \displaystyle \Delta_{n} = \left\lbrace ( t_{1},\ldots,t_{n}), \; 0 \leqslant t_{1} \leqslant \ldots \leqslant t_{n} \leqslant 1 \right\rbrace $ and $A_n(\xi, \ldots, \xi)$ is a Hilbert-Shmidt form from $\xi$ or the multiple Wiener integrals from the nonrandom kernels $a_n.$ For continuous linear operator $B$ in $L_2([0; 1])$ define new Hilbert--Shmidt form
 $$
 A^B_n(\vf_1, \ldots, \vf_n)=A_n(B^*\vf_1, \ldots, B^*\vf_n).
 $$
 \begin{defn} \cite{5} If the operator norm $\|B\|\leq1,$ then the sum
 $$
 \sum^\infty_{n=0}A^B_n(\xi, \ldots, \xi)
 $$
 converges in square-mean and is an action of the second quantization operator $\Gamma(B)$ on $\alpha.$ 
 \end{defn}
 The very simple form $\Gamma(B)$ has on the stochastic exponents, i.e. on the random variables of the form $\cE(h)=e^{(h, \xi)-\frac{1}{2}\|h\|^2}, h\in L_2([0, 1]).$ It can be easily checked \cite{5} that
 $$
 \Gamma(B^*)\cE(h)=\cE(Bh).
 $$
 The operators of second quantization are the partial case of conditional expectation \cite{5, 6} and have all its properties.

\section{Clark formula}
 
In this section we recall the Clark representation formula for square integrable functionals from $w$ (the Wiener process ). Then, we propose a generalisation in terms of extended stochastic integral and we precise an optimal representation called by the minimal norm integral representation.

It is known \cite[Th. 1.1.3]{10} that for each $\alpha\in L^2(\Omega,\sigma(w),\mathbb{P})$ there exists unique square integrable $w-$predictable process $(u(t))_{t\in [0,1]},$ such that the following relation with the It\^{o} integral holds 
\begin{equation}
\label{z}
\alpha=\mathbb{E}\alpha +\int^1_0 u(t)dw(t),\quad  \mathbb{E} \alpha ^2 = (\mathbb{E} \alpha )^2 + \mathbb{E} \int_0^1 u(t)^2 dt .
\end{equation}

As an example, let us derive the Clark representation for one-dimensional functional
$$
\alpha=f(w(1)).
$$
Of course, the formula is not new (see, for example, \cite{11}). Still we present the proof, because the only assumption we make about $f$ is the square integrability. Let $p_s(x)$ be the transition density of the Wiener process 
$$
p_s(x)=(2\pi s)^{-1/2}\exp(-x^2/2s).
$$
Denote by $(P_s)_{s\geq 0}$ the transition semigroup of the Wiener  process
$$
P_sf(x)=\mathbb{E}f(x+w(s))=\int_{\mathbb{R}}f(y)p_s(x-y)dy.
$$

\begin{lem}
\label{lem3}
For every measurable function $f$ with $\displaystyle \int_{\mathbb{R}}f^2(y)p_1(y)dy<\infty,$ the Clark representation for $f(w(1))$ is 
\begin{equation}
\label{lem3_1}
f(w(1))=\mathbb{E} f(w(1)) +\int^1_0 \partial_x P_{1-t}f(w(t))dw(t).
\end{equation}
\end{lem}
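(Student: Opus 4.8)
The plan is to verify \eqref{lem3_1} by showing that the It\^o integral on the right-hand side is the Clark integrand for $f(w(1))$, which by the uniqueness in \eqref{z} amounts to checking that $M_t := P_{1-t}f(w(t))$ is a martingale whose stochastic differential is $\partial_x P_{1-t}f(w(t))\,dw(t)$. First I would record the semigroup identity $P_sP_r = P_{s+r}$ together with the smoothing property: for $t<1$ the function $x\mapsto P_{1-t}f(x)$ is $C^\infty$ because $p_{1-t}$ is smooth and the Gaussian integrability hypothesis $\int f^2(y)p_1(y)\,dy<\infty$ makes differentiation under the integral sign legitimate. Then $M_t = \mathbb{E}[f(w(1))\mid \mathcal{F}_t]$ by the Markov property, so $M$ is automatically a square-integrable martingale with $M_0 = \mathbb{E}f(w(1))$ and $M_1 = f(w(1))$; the only real content is to identify its integrand.

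To get the integrand I would apply It\^o's formula to $M_t = g(t,w(t))$ with $g(t,x) = P_{1-t}f(x)$, using that $g$ solves the backward heat equation $\partial_t g + \tfrac12\partial_{xx}g = 0$ on $[0,1)\times\mathbb{R}$ (this is just $\partial_t P_{1-t}f = -\tfrac12\partial_{xx}P_{1-t}f$, the generator identity for the Wiener semigroup). It\^o's formula then gives
\begin{equation}
\label{lem3_ito}
dM_t = \partial_x P_{1-t}f(w(t))\,dw(t) + \bigl(\partial_t g + \tfrac12\partial_{xx}g\bigr)(t,w(t))\,dt = \partial_x P_{1-t}f(w(t))\,dw(t),
\end{equation}
so integrating from $0$ to $1$ yields \eqref{lem3_1}, provided the stochastic integral is well defined up to time $1$. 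Comparing with \eqref{z} and invoking uniqueness of the Clark representation finishes the argument once integrability is in hand.

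The main obstacle is precisely the behaviour as $t\uparrow 1$: the smoothness of $P_{1-t}f$ degenerates, and one must justify both the application of It\^o's formula up to the terminal time and the square-integrability $\mathbb{E}\int_0^1 (\partial_x P_{1-t}f(w(t)))^2\,dt < \infty$ under only the assumption $\int f^2 p_1 < \infty$. I would handle this by first working on $[0,1-\delta]$, where everything is smooth and bounded enough to apply It\^o's formula cleanly, obtaining $M_{1-\delta} = \mathbb{E}f(w(1)) + \int_0^{1-\delta}\partial_x P_{1-t}f(w(t))\,dw(t)$; then let $\delta\to 0$. On the left, $M_{1-\delta}\to M_1 = f(w(1))$ in $L^2$ by martingale convergence. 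On the right, the It\^o isometry shows $\mathbb{E}\int_0^{1-\delta}(\partial_x P_{1-t}f(w(t)))^2\,dt = \mathbb{E}M_{1-\delta}^2 - (\mathbb{E}f(w(1)))^2$ stays bounded by $\mathbb{E}f(w(1))^2 - (\mathbb{E}f(w(1)))^2 < \infty$, which forces $\partial_x P_{1-\cdot}f(w(\cdot)) \in L^2(\Omega\times[0,1])$ and gives $L^2$-convergence of the truncated stochastic integrals to $\int_0^1\partial_x P_{1-t}f(w(t))\,dw(t)$. A short explicit estimate — differentiating $P_{1-t}f(x) = \int f(y)p_{1-t}(x-y)\,dy$ and bounding $|\partial_x p_{1-t}(x-y)|$, then integrating in $x$ against $p_t$ and using $p_t(x)p_{1-t}(x-y) = p_1(y)\,p_{t(1-t)}\!\bigl(x - t y\bigr)$ — can also be used to exhibit the finiteness of the integrand norm directly in terms of $\int f^2 p_1$, but the soft martingale argument is cleaner and I would present that.
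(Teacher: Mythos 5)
Your proposal is correct, and while it shares the paper's core mechanism (It\^o's formula applied to $g(t,x)=P_{1-t}f(x)$ together with the backward heat equation $\partial_t g+\tfrac12\partial^2_x g=0$), it organizes the passage to general $f$ differently. The paper first proves the formula for continuous compactly supported $f$, applying It\^o on $[\varepsilon,1-\varepsilon]$ and letting $\varepsilon\to 0$, and then treats a general $f$ with $\int f^2p_1<\infty$ by approximating it in $L^2(p_1\,dy)$ by such functions, using the uniqueness of the Clark integrand and a pointwise identification of the limit $\partial_xP_{1-t}f_n(w(t))\to\partial_xP_{1-t}f(w(t))$. You instead exploit the fact that $P_{1-t}f$ is already smooth for $t<1$ whatever the regularity of $f$, so It\^o applies directly on $[0,1-\delta]$ for arbitrary square-integrable $f$; the only delicate point is the endpoint $t=1$, which you resolve by martingale convergence of $M_{1-\delta}=\mathbb{E}[f(w(1))\mid\mathcal{F}_{1-\delta}]$ on the left and a uniform $L^2$ bound on the truncated stochastic integrals on the right. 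This buys a single limit instead of a double approximation and avoids having to identify the limiting integrand; the paper's route, in exchange, only ever manipulates It\^o integrals of integrands that are manifestly square integrable. One small refinement to your argument: quoting the ``It\^o isometry'' on $[0,1-\delta]$ presupposes that $\mathbb{E}\int_0^{1-\delta}(\partial_xP_{1-t}f(w(t)))^2dt<\infty$, which is not yet known at that stage. You should instead note that $N_t=M_t-M_0$ is a continuous local martingale (from It\^o) which is also a true $L^2$-bounded martingale, so $\mathbb{E}\langle N\rangle_{1-\delta}\le\mathbb{E}N_{1-\delta}^2\le\mathrm{Var}\,f(w(1))$ by localization and Fatou; this yields the required square integrability of the integrand on $[0,1]$ by monotone convergence, after which the isometry and the $L^2$-convergence of the truncated integrals follow as you state.
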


\begin{proof}
Note that for each compact set $K\subset (0,1)\times \mathbb{R}$ and all $n,m\geq 0$ one has
\begin{equation}
\label{aaa}
\sup_{y\in \mathbb{R}} \sup_{(s,x)\in K}\bigg|  \frac{\partial^n_s \partial^m_x p_s(x-y)}{p_1(y)}  \bigg|<\infty.
\end{equation}
Hence, the function $P_sf(x)$ is correctly defined for $0<s<1, $ $x\in \mathbb{R}$ and is smooth. In fact,
\begin{equation}
\label{lem3_2}
\partial^n_s \partial^m_x P_sf(x)=\int_{\mathbb{R}}f(y) \partial^n_s \partial^m_x p_s(x-y) dy.
\end{equation}
Consequently, 
\begin{equation}
\label{lem3_3}
\partial_s P_sf(x)=\frac{1}{2}\partial^2_x P_sf(x), \ 0<s<1, x\in\mathbb{R}.
\end{equation}

It is enough to prove \eqref{lem3_1} only for continuous compactly supported functions. Indeed, given any measurable $f$ with $\displaystyle \int_{\mathbb{R}}f^2(y)p_1(y)dy<\infty,$ the Clark representation theorem implies that there exists square integrable $w-$predictable process $(u(t))_{t\in [0,1]},$ such that
\begin{equation}
\label{lem3_4}
f(w(1))-\mathbb{E} f(w(1)) =\int^1_0 u(t)dw(t).
\end{equation}
On the other hand, there exists sequence $(f_n)_{n\geq 1}$ of continuous compactly supported functions, such that
$$
\int_{\mathbb{R}}(f_n(y)-f(y))^2p_1(y)dy\to 0, \ n\to\infty.
$$
If \eqref{lem3_1} is proved for $f_n,$ then
$$
f_n(w(1))-\mathbb{E} f_n(w(1)) =\int^1_0 \partial_x P_{1-t}f_n(w(t))dw(t).
$$
The left-hand side of the latter equality converges to the left-hand side of \eqref{lem3_4} in $L^2$. Correspondingly, the right-hand side also converges, i.e.
$$
\int^1_0\mathbb{E}(\partial_x P_{1-t}f_n(w(t))-u(t))^2dt\to 0, n\to\infty.
$$
From \eqref{lem3_2} it follows that pointwisely
$$
\partial_x P_{1-t}f_n(w(t))\to \partial_x P_{1-t}f(w(t)), n \to\infty.
$$
Hence, the needed representation for $f$ follows.
It remains to check the case when $f$ is continuous and compactly supported. Applying the It\^o formula to the function $(t,x)\to P_{1-t}f(x)$ and the process $(w(t))_{\varepsilon\leq t\leq 1-\varepsilon},$ and using \eqref{lem3_3} one obtains the representation
$$
P_\varepsilon f(w(1-\varepsilon))=P_{1-\varepsilon} f(w(\varepsilon))+\int^{1-\varepsilon}_\varepsilon \partial_x P_{1-t}f(w(t))dw(t).
$$
Additional assumptions on $f$ imply
$$
P_\varepsilon f(w(1-\varepsilon))\to f(w(1)), \ \varepsilon\to 0,
$$ 
and
$$
P_{1-\varepsilon} f(w(\varepsilon))\to P_1f(0)=\mathbb{E}f(w(1)), \ \varepsilon\to 0.
$$
The lemma is proved.

\end{proof}

\begin{remark}
\label{rem2}
Using the equation \eqref{lem3_2}, the formula \eqref{lem3_1} can be rewritten as
$$
f(w(1))=\mathbb{E} f(w(1)) +\int^1_0\bigg( \int_{\mathbb{R}}f(y)\partial_x p_{1-t}(w(t)-y)dy\bigg)dw(t).
$$
\end{remark}

Now we return to the genaral situation. Let $\xi$ be a white noise in the real separable Hilbert space $H$.  All the random variables are assumed to be measurable with respect to $\xi,$ i.e. $\mathcal{F}=\sigma(\xi).$ Then any square integrable random variable can be written in the form analogous to \eqref{z}.

\begin{lem}
\label{lem1}
Any $\alpha\in L^2(\Omega,\mathcal{F},\mathbb{P})$  can be written in the form
$$
\alpha=\mathbb{E} \alpha +I u,
$$
for some $u$ from the domain of $I.$
\end{lem}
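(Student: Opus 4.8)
The plan is to build $u$ directly from the It\^o--Wiener expansion \eqref{x} of $\alpha$. Writing $\alpha=\sum_{n=0}^\infty A_n(\xi,\ldots,\xi)$ with $A_n\in H^{\otimes n}_s$, one has $A_0=\mathbb{E}\alpha$ and, by the isometry recalled above, $\mathbb{E}\alpha^2=\sum_{n=0}^\infty n!\,\|A_n\|^2_n<\infty$. For $n\geq1$ the form $A_n$, being symmetric in all its $n$ arguments, is in particular symmetric in the last $n-1$, so it may be regarded as an element of $H\otimes H^{\otimes(n-1)}_s$ with the same Hilbert--Schmidt norm; as such it is the kernel of an $H$-valued Hermite polynomial (multiple Wiener integral) of degree $n-1$, which I denote $u_{n-1}$. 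A coordinate computation gives $\mathbb{E}\|u_{n-1}\|^2=(n-1)!\,\|A_n\|^2_n$.

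Then I would set $u:=\sum_{n\geq1}u_{n-1}$, which converges in $L^2(\Omega,\sigma(\xi),\mathbb{P};H)$ since $(n-1)!\leq n!$ gives $\sum_{n\geq1}(n-1)!\,\|A_n\|^2_n\leq\mathbb{E}\alpha^2<\infty$. Each finite-chaos element $u_{n-1}$ lies in $\mbox{Dom}(I)$, and by the description of the extended integral on chaos spaces \cite[\S 1.3]{10} — $I$ acts on a degree-$(n-1)$ kernel $g\in H\otimes H^{\otimes(n-1)}_s$ by full symmetrization $\widetilde g$ together with a shift of the degree — one gets $Iu_{n-1}=A_n(\xi,\ldots,\xi)$, because the kernel $A_n$ of $u_{n-1}$ is already symmetric. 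Consequently the partial sums $\sum_{n=1}^N u_{n-1}$ converge in $L^2$ to $u$ while their images $\sum_{n=1}^N A_n(\xi,\ldots,\xi)$ converge in $L^2$ to $\alpha-\mathbb{E}\alpha$; since $I$ is closed,
$$
u\in\mbox{Dom}(I)\qquad\text{and}\qquad Iu=\sum_{n\geq1}A_n(\xi,\ldots,\xi)=\alpha-\mathbb{E}\alpha,
$$
which is the assertion. (The domain membership can also be read off directly from the criterion $\sum_{n\geq1}n!\,\|A_n\|^2_n=\mathbb{E}\alpha^2-(\mathbb{E}\alpha)^2<\infty$.)

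The part that needs care is the bookkeeping in the second paragraph: one must check that the (non-anticipative, symmetric) integrand $u_{n-1}$ is mapped by $I$ onto the whole degree-$n$ polynomial $A_n(\xi,\ldots,\xi)$ with no spurious multiplicative constant, and that the domain condition for $I$ is precisely the square-summability of $\{n!\,\|A_n\|^2_n\}$. A self-contained way to confirm both is to test against the stochastic exponentials: if $u\in\mbox{Dom}(I)$ then, using $D\cE(h)=h\,\cE(h)$ and the $D$--$I$ duality, $\mathbb{E}\bigl[(Iu)\cE(h)\bigr]=\mathbb{E}\bigl[(u,D\cE(h))\bigr]=\mathbb{E}\bigl[(u,h)\cE(h)\bigr]$ for every $h\in H$; expanding in powers of $h$ identifies the degree-$n$ component of $Iu$ with $A_n(\xi,\ldots,\xi)$, and since $\{\cE(h):h\in H\}$ is total in $L^2(\Omega,\sigma(\xi),\mathbb{P})$ this pins down $Iu$ uniquely and recovers the formula above.
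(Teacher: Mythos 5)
Your construction is exactly the paper's: you take the It\^o--Wiener expansion of $\alpha$ and lower each kernel $A_n$ ($n\ge 1$) by one degree to get an $H$-valued polynomial of degree $n-1$, sum these (convergence from $(n-1)!\le n!$), and identify the extended integral of the sum with $\alpha-\mathbb{E}\alpha$; the only cosmetic difference is that the paper verifies $Iu=\alpha-\mathbb{E}\alpha$ by testing the duality $\mathbb{E}(D\beta,u)=\mathbb{E}(\alpha-\mathbb{E}\alpha)\beta$ against arbitrary stochastically differentiable $\beta$, whereas you invoke the chaos-level action of $I$ (symmetrization plus degree shift) together with closedness of $I$, with the exponential-martingale duality as a cross-check. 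Both routes are correct and the key point you flag --- that $Iu_{n-1}=A_n(\xi,\ldots,\xi)$ with no extra constant because $A_n$ is already symmetric --- is handled properly.
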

 In fact, we prove the lemma \ref{lem1} by constructing a bounded linear operator
$$
d:L^2(\Omega,\mathcal{F},\mathbb{P})\to L^2(\Omega,\mathcal{F},\mathbb{P};H),
$$
such that 
$$
\alpha=\mathbb{E}\alpha+I(d\alpha).
$$ 
 
\begin{proof} 
Consider an It\^o-Wiener expansion of $\alpha $
$$
\alpha=\mathbb{E}\alpha + \sum^\infty_{n=1}A_n(\xi,\ldots,\xi),
$$
and define $d\alpha\in L^2(\Omega,\mathcal{F},\mathbb{P};H) $ as follows
$$
d\alpha =\sum^\infty_{n=1} A_n(\cdot,\underbrace{\xi,\ldots,\xi}_{n-1}).
$$ 
Here, $A_n$ is viewed as an $(n-1)-$linear $H-$valued Hilbert-Schmidt form on $H.$ Respectively,  $A_n(\cdot,\underbrace{\xi,\ldots,\xi}_{n-1})$ is an $H-$valued orthogonal polynomial from $\xi$ of degree $n-1.$ The series in the definition of $d\alpha$ converges, as 
$$
\sum^\infty_{n=1}\mathbb{E} |A_n(\cdot,\xi,\ldots,\xi)|^2=\sum^\infty_{n=1} (n-1)! \|A_n\|^2_{n}
\leq \sum^\infty_{n=0} n! \|A_n\|^2_{n}=\mathbb{E}\alpha^2.
$$
In order to check, that $\alpha=\mathbb{E}\alpha +I(d\alpha),$  consider arbitrary stochastically differentiable function $\beta\in L^2(\Omega,\mathcal{F},\mathbb{P}).$ Write the It\^o-Wiener expansion of $\beta $
$$
\beta=\sum^\infty_{n=0}B_n(\xi,\ldots,\xi).
$$
Then, 
$$
\mathbb{E} (D\beta, d\alpha)=\mathbb{E} \sum^\infty_{n=1}n( A_n(\cdot,\xi,\ldots,\xi),B_n(\cdot,\xi,\ldots,\xi))=\sum^\infty_{n=1} n! (A_n,B_n)_{n}=
\mathbb{E} (\alpha-\mathbb{E}\alpha)\beta.
$$
The needed equality is verified.
\end{proof}

As a corollary of the lemma \ref{lem1}, to each $\alpha\in L^2(\Omega,\mathcal{F},\mathbb{P})$ we've associated a non-empty set
$$
\mathcal{K}(\alpha)=\{u\in \mbox{Dom}(I): Iu=\alpha-\mathbb{E}\alpha\}.
$$
It is evident, that $\mathcal{K}(\alpha)$ is an affine subspace. It is closed in $L^2(\Omega,\mathcal{F},\mathbb{P};H),$ because the operator $I$ is closed. Consequently, the norm attains its minimum on a unique element $u(\alpha)\in \mathcal{K}(\alpha).$ In other words, for each $\alpha\in L^2(\Omega,\mathcal{F},\mathbb{P})$ there exists unique $u(\alpha)\in L^2(\Omega,\mathcal{F},\mathbb{P};H),$ such that  
$$
u(\alpha)=\mbox{argmin}_{u\in \mathcal{K}(\alpha)}\mathbb{E}|u|^2.
$$

We will refer to the expression 
$$
\alpha=\mathbb{E}\alpha+I(u(\alpha))
$$
as to the minimal norm integral representation of $\alpha$.

Next we prove that the minimal norm integrand $u(\alpha)$ coincides with $d\alpha$.
\begin{lem}
\label{lem2}
Let $\alpha\in L^2(\Omega,\mathcal{F},\mathbb{P}).$ Then 
$$
u(\alpha)=d\alpha.
$$
\end{lem}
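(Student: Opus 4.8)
The plan is to characterize $d\alpha$ as the orthogonal projection of an arbitrary element of $\mathcal{K}(\alpha)$ onto the kernel of $I$, and then use the standard Hilbert-space fact that the element of minimal norm in a closed affine subspace is the one orthogonal to the direction space. Concretely, fix any $u \in \mathcal{K}(\alpha)$. Since both $u$ and $d\alpha$ satisfy $Iu = I(d\alpha) = \alpha - \mathbb{E}\alpha$, the difference $v := u - d\alpha$ lies in $\ker I \subset \mathrm{Dom}(I)$, which is exactly the direction space of the affine set $\mathcal{K}(\alpha)$. Hence to prove $u(\alpha) = d\alpha$ it suffices to show that $d\alpha \perp \ker I$ in $L^2(\Omega,\mathcal{F},\mathbb{P};H)$, i.e. that $\mathbb{E}(d\alpha, v) = 0$ for every $v \in \ker I$.

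The key computation is to identify $\ker I$ in terms of chaos expansions and then compute the pairing chaos-by-chaos. First I would note that $\ker I = (\mathrm{Range}(D))^\perp$, since $I = D^*$ and $D$ is closed and densely defined; equivalently, $v \in \ker I$ iff $\mathbb{E}(D\beta, v) = 0$ for all stochastically differentiable $\beta$. Writing $v = \sum_{n \geq 0} V_n(\cdot, \xi, \ldots, \xi)$ for its chaos expansion (as an $H$-valued random element, $V_n$ an $(n{+}1)$-linear form, the degree-$n$ polynomial component sitting in the $n$-th Wiener chaos) and testing against $\beta = A_k(\xi,\ldots,\xi)$ for each $k$, the same orthogonality-of-chaoses and the isometry $\mathbb{E} A_n(\xi,\ldots,\xi)^2 = n!\|A_n\|_n^2$ used in the proof of Lemma \ref{lem1} show that $v \in \ker I$ forces the total symmetrization of each $V_n$ (as an $n{+}1$-linear form on $H$) to vanish. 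Now pair such a $v$ against $d\alpha = \sum_{n\geq 1} A_n(\cdot, \xi,\ldots,\xi)$: only components in the same Wiener chaos contribute, so $\mathbb{E}(d\alpha, v) = \sum_{n \geq 1} c_n \langle A_n, (\text{symmetrization of } V_{n-1})\rangle$ for appropriate combinatorial constants $c_n$, where the relevant pairing only sees the fully symmetrized part of $V_{n-1}$ because $A_n$ is itself symmetric. Since that symmetrized part is zero, every term vanishes and $\mathbb{E}(d\alpha, v) = 0$.

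Putting the pieces together: $u - d\alpha \in \ker I$ and $d\alpha \perp \ker I$, so by Pythagoras $\mathbb{E}|u|^2 = \mathbb{E}|d\alpha|^2 + \mathbb{E}|u - d\alpha|^2 \geq \mathbb{E}|d\alpha|^2$, with equality iff $u = d\alpha$; since $d\alpha \in \mathcal{K}(\alpha)$ by Lemma \ref{lem1}, this identifies $d\alpha$ as the unique minimizer, i.e. $u(\alpha) = d\alpha$.

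I expect the main obstacle to be the bookkeeping in the second step: carefully setting up the $H$-valued chaos expansion of a general $v \in L^2(\Omega,\mathcal{F},\mathbb{P};H)$, making precise the claim that membership in $\ker I$ is equivalent to the vanishing of the symmetrizations of its kernels, and verifying that the inner product $\mathbb{E}(d\alpha, v)$ genuinely only depends on those symmetrizations (so that antisymmetric parts of the $V_n$, which are exactly what populates $\ker I$, drop out when paired with the symmetric forms $A_n$). Once that structural fact is in place the rest is the routine Hilbert-space projection argument, so I would state the kernel description as the crux and prove it directly from the duality $I = D^*$ and the isometry between symmetric Hilbert--Schmidt forms and Wiener chaoses recalled in Section 1.
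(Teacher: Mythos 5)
Your proof is correct, and the computational engine is the same as the paper's, but the packaging is genuinely different. The paper argues by a direct norm comparison: it expands an arbitrary $u\in\mathcal{K}(\alpha)$ with kernels $B_n$ (symmetric in the first $n-1$ arguments), computes $\mathbb{E}|u|^2=\sum_n (n-1)!\|B_n\|^2_n$, uses $Iu=\sum_n \hat{B}_n(\xi,\ldots,\xi)$ to conclude $A_n=\hat{B}_n$, and then invokes the fact that symmetrization has norm $1$ to get $\|A_n\|_n\le\|B_n\|_n$. Your route — identify $\ker I$ as the elements whose kernels have vanishing full symmetrization, check $d\alpha\perp\ker I$ chaos by chaos, and finish with Pythagoras — is the projection-theoretic reading of the same facts: the inequality $\|\hat{B}_n\|_n\le\|B_n\|_n$ is precisely the statement that symmetrization is an orthogonal projection, and your identity $\mathbb{E}|u|^2=\mathbb{E}|d\alpha|^2+\mathbb{E}|u-d\alpha|^2$ is its termwise consequence. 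What your version buys is that the equality case is immediate (you identify the minimizer without separately appealing to its uniqueness), and it exhibits $d\alpha$ as the orthogonal projection of any $u\in\mathcal{K}(\alpha)$ onto $(\ker I)^{\perp}=\overline{\mathrm{Range}\,D}$, which is consistent with Lemma \ref{lem4}, where $d\alpha$ is written explicitly as $D$ of a Bochner integral. The bookkeeping point you flag as the crux — that pairing the fully symmetric $A_n$ against $V_{n-1}$ only sees $\hat{V}_{n-1}$, and that $v\in\ker I$ iff all $\hat{V}_n=0$ (obtained by testing $\mathbb{E}(D\beta,v)=0$ against polynomials $\beta$) — does go through exactly as you describe, using the same isometry $\mathbb{E}A_n(\xi,\ldots,\xi)^2=n!\|A_n\|^2_n$ that the paper uses in Lemma \ref{lem1}.
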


\begin{proof}
Recall from the lemma \ref{lem1}, that 
$$
\mathbb{E}|d\alpha |^2=\sum^\infty_{n=1}(n-1)!\|A_n\|^2_n.
$$
To show that $u(\alpha)=d\alpha$ we need to check that for each $u\in \mathcal{K}(\alpha),$ $\mathbb{E}|u|^2 \geq \mathbb{E}|d\alpha|^2.$ Consider the It\^o-Wiener expansion of $u$. For any $h\in H,$ 
$$
(u,h)=\sum^\infty_{n=1}B_{n}(\underbrace{\xi,\ldots,\xi}_{n-1};h),
$$
where $B_{n}$ is $n-$linear Hilbert-Schmidt form on $H,$ that is symmetric in first $n-1$ arguments. Let us calculate $\mathbb{E}|u|^2.$
$$
\mathbb{E}|u|^2=\sum_k \mathbb{E}\bigg(\sum^\infty_{n=1} B_{n}(\underbrace{\xi,\ldots,\xi}_{n-1};e_k)\bigg)^2=
$$
$$
=\sum_k \sum^\infty_{n=1}(n-1)! \| B_{n}(\cdot;e_k)\|^2_{n-1}=\sum^\infty_{n=1}(n-1)!\|B_n\|^2_n.
$$
The action of $I$ on $u$ is expressed in terms of the It\^o-Wiener expansion in the following way
$$
Iu=\sum^\infty_{n=1}\hat{B_{n}}(\underbrace{\xi,\ldots,\xi}_{n}),
$$ 
where $\hat{B_{n}}$ denotes the symmetrization of $B_{n}.$ But, $Iu=\alpha-\mathbb{E}\alpha,$ so
$$
A_n=\hat{B_n}.
$$
It is well-known that the symmetrization operator has the norm $1,$ i.e.
$$
\|A_n\|_n\leq \|B_n\|_n,
$$
and $\mathbb{E}|u|^2\geq \mathbb{E}|d\alpha|^2.$ 
\end{proof}

In the next lemma we derive analytical representation of the operator $d.$ Below $(T_t)_{t\geq 0}$ denotes the Ornstein-Uhlenbeck semigroup \cite[\S 1.4]{10}. 

\begin{lem}
\label{lem4}
For any $\alpha\in L^2(\Omega,\mathcal{F},\mathbb{P})$ the following relation holds
\begin{equation}
\label{30_07}
d\alpha=D\bigg(\int^\infty_0 T_t(\alpha-\mathbb{E}\alpha)dt\bigg)=\int^\infty_0 D(T_t \alpha) dt,
\end{equation}
where integrals are taken in the Bochner's sense.
\end{lem}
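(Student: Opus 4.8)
The plan is to work entirely on the level of the It\^o--Wiener expansion, since both $d$ and the Ornstein--Uhlenbeck semigroup $T_t$ act diagonally on the chaos spaces. Recall that $T_t$ acts on the $n$-th chaos by multiplication by $e^{-nt}$: if $\alpha=\sum_{n=0}^\infty A_n(\xi,\ldots,\xi)$, then $T_t\alpha=\sum_{n=0}^\infty e^{-nt}A_n(\xi,\ldots,\xi)$. (This is the defining property of the Ornstein--Uhlenbeck semigroup and may be taken from the reference \cite[\S 1.4]{10}.) Consequently $T_t(\alpha-\mathbb{E}\alpha)=\sum_{n=1}^\infty e^{-nt}A_n(\xi,\ldots,\xi)$, and the scalar factor $e^{-nt}$ is integrable in $t$ over $(0,\infty)$ for every $n\geq 1$ with $\int_0^\infty e^{-nt}\,dt=\tfrac1n$.

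\textbf{Step 1: justify the Bochner integral and interchange it with the sum.} First I would check that $t\mapsto T_t(\alpha-\mathbb{E}\alpha)$ is Bochner integrable on $(0,\infty)$ as an $L^2(\Omega,\mathcal{F},\mathbb{P})$-valued map. Since $\|T_t(\alpha-\mathbb{E}\alpha)\|_{L^2}^2=\sum_{n=1}^\infty e^{-2nt}n!\|A_n\|_n^2\leq e^{-2t}\,\mathbb{E}(\alpha-\mathbb{E}\alpha)^2$, strong continuity of $(T_t)$ gives measurability and $\int_0^\infty\|T_t(\alpha-\mathbb{E}\alpha)\|_{L^2}\,dt\leq\mathbb{E}(\alpha-\mathbb{E}\alpha)^2\cdot\int_0^\infty e^{-t}\,dt<\infty$, so the integral exists. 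Orthogonality of the chaos spaces together with the bound $\sum_{n\geq1}\int_0^\infty e^{-nt}\sqrt{n!}\,\|A_n\|_n\,dt=\sum_{n\geq1}\tfrac1n\sqrt{n!}\,\|A_n\|_n<\infty$ (finite because $\sum n!\|A_n\|_n^2<\infty$ and $\sum 1/(n^2 n!)<\infty$, Cauchy--Schwarz) lets me pull the sum out of the Bochner integral, yielding
$$
\int_0^\infty T_t(\alpha-\mathbb{E}\alpha)\,dt=\sum_{n=1}^\infty\frac1n A_n(\xi,\ldots,\xi).
$$

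\textbf{Step 2: apply $D$ and compare with $d\alpha$.} Now I would apply the stochastic derivative $D$ term by term. By definition $D$ sends $\tfrac1n A_n(\xi,\ldots,\xi)$ to $\tfrac1n\cdot n\, A_n(\cdot,\underbrace{\xi,\ldots,\xi}_{n-1})=A_n(\cdot,\underbrace{\xi,\ldots,\xi}_{n-1})$, which is exactly the $n$-th term of $d\alpha$ as constructed in Lemma~\ref{lem1}. To make the term-by-term application of $D$ rigorous I would note that $\sum_{n\geq1}\tfrac1n A_n(\xi,\ldots,\xi)$ lies in $\mathrm{Dom}(D)$ precisely because $\sum_{n\geq1}n\cdot\tfrac{1}{n^2}\cdot(n-1)!\,\|A_n\|_n^2=\sum_{n\geq1}\tfrac{(n-1)!}{n}\|A_n\|_n^2\leq\mathbb{E}(\alpha-\mathbb{E}\alpha)^2<\infty$ (indeed $\mathbb{E}|d\alpha|^2=\sum_{n\geq1}(n-1)!\|A_n\|_n^2$ is itself finite, as recalled in Lemma~\ref{lem2}), so $D$ applied to the partial sums converges in $L^2(\Omega,\mathcal{F},\mathbb{P};H)$ to $d\alpha$ and closedness of $D$ identifies the limit. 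This gives the first equality in \eqref{30_07}; the second equality, $D\big(\int_0^\infty T_t(\alpha-\mathbb{E}\alpha)\,dt\big)=\int_0^\infty D(T_t\alpha)\,dt$, follows from the same chaos-level computation since $D(T_t\alpha)=\sum_{n\geq1}n e^{-nt}A_n(\cdot,\xi,\ldots,\xi)$ integrates in $t$ to the same thing, and the Bochner integral of the $H$-valued map $t\mapsto D(T_t\alpha)$ commutes with the closed operator $D$ by a standard Hille-type argument. Note $D(T_t\alpha)=D(T_t(\alpha-\mathbb{E}\alpha))$ since $D$ kills constants, which is why the constant is harmless in the second form.

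\textbf{The main obstacle} I expect is purely the bookkeeping of justifying the two interchanges — sum with Bochner integral in Step~1, and the closed operator $D$ with the Bochner integral in Step~2 — rather than any conceptual difficulty; the algebra on chaos spaces is immediate once one knows $T_t$ acts as $e^{-nt}$ on the $n$-th chaos. The one point requiring a little care is confirming that the intermediate random variable $\int_0^\infty T_t(\alpha-\mathbb{E}\alpha)\,dt$ actually belongs to $\mathrm{Dom}(D)$, which I handle above by the explicit series criterion; once that is in place the closedness of $D$ does all the remaining work.
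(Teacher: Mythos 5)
Your proposal is correct and follows essentially the same route as the paper: compute the chaos expansion of the Bochner integral $\int_0^\infty T_t(\alpha-\mathbb{E}\alpha)\,dt=\sum_{n\geq 1}\tfrac1n A_n(\xi,\ldots,\xi)$ using the bound $\|T_t(\alpha-\mathbb{E}\alpha)\|\leq e^{-t}\sqrt{\mathbb{E}(\alpha-\mathbb{E}\alpha)^2}$, apply $D$ chaos-by-chaos to recover $d\alpha$, and invoke closedness of $D$ for the second equality; you merely spell out the interchange and domain justifications that the paper leaves implicit. The only blemishes are two harmless bookkeeping slips (the Cauchy--Schwarz pairing should give $\sum 1/n^2$ rather than $\sum 1/(n^2 n!)$, and the domain criterion should read $\sum n^2\cdot\tfrac1{n^2}(n-1)!\|A_n\|_n^2=\sum(n-1)!\|A_n\|_n^2$), neither of which affects the argument.
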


\begin{proof}
Recall that in terms of the It\^o-Wiener expansion
$$
T_t(\alpha-\mathbb{E}\alpha)=\sum^\infty_{n=1} e^{-nt}A_n(\xi,\ldots,\xi),
$$
The norm of $T_t(\alpha-\mathbb{E}\alpha)$ equals
$$
\|T_t(\alpha-\mathbb{E}\alpha)\|=\sqrt{\sum^\infty_{n=1} e^{-2nt}n!\|A_n\|^2_n}\leq
e^{-t}\sqrt{\mathbb{E}(\alpha-\mathbb{E}\alpha)^2}.
$$
Hence, the Bochner integral $\int^\infty_0 T_t(\alpha-\mathbb{E}\alpha) dt$ is well-defined. Its It\^o-Wiener expansion is
$$
\int^\infty_0 T_t(\alpha-\mathbb{E}\alpha) dt=\sum^\infty_{n=1}\frac{1}{n}A_n(\xi,\ldots,\xi),
$$
so the first equality \eqref{30_07} follows. The second one follows from the fact that the stochastic derivative $D$ is the closed operator.
\end{proof}

The next result is the minimal norm integral representation of the functional 
$$
\alpha=f((g,\xi))
$$ 
in the general case, i.e. when $\xi$ is a white noise in the Hilbert space $H,$ $g\in H.$ Denote by $\Phi(x)$ the distribution function of the standard Gaussian distribution
$$
\Phi(x)=\frac{1}{\sqrt{2\pi}}\int^x_{-\infty}e^{-\frac{\tau^2}{2}}d\tau.
$$ 

\begin{lem}
\label{lem5}
Consider random variable $\alpha=f((g,\xi))\in L^2(\Omega,\mathcal{F},\mathbb{P}).$ Then
$$
d\alpha=\bigg(\int_{\mathbb{R}}f(y)\frac{1}{|g|^2}e^{\frac{(g,\xi)^2-y^2}{2|g|^2}}\bigg(\Phi\bigg(\frac{(g,\xi)}{|g|}\bigg)-1_{(g,\xi)>y}\bigg)dy\bigg)g.
$$
Respectively, its minimal norm integral representation is
$$
f((g,\xi))=\mathbb{E}f((g,\xi))+I\bigg[\bigg(\int_{\mathbb{R}}f(y)\frac{1}{|g|^2}e^{\frac{(g,\xi)^2-y^2}{2|g|^2}}\bigg(\Phi\bigg(\frac{(g,\xi)}{|g|}\bigg)-1_{(g,\xi)>y}\bigg)dy\bigg)g\bigg].
$$
\end{lem}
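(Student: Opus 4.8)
The plan is to feed the functional $\alpha=f((g,\xi))$ into Lemma~\ref{lem4} and then reduce everything to a one--dimensional Ornstein--Uhlenbeck computation. Put $\sigma=|g|$, so that $(g,\xi)$ is a centered Gaussian variable of variance $\sigma^2$; write $\gamma_\sigma$ for its law. The first step is to observe that the Ornstein--Uhlenbeck semigroup preserves the class of functionals of $(g,\xi)$: from $\Gamma(e^{-t}I)\cE(h)=\cE(e^{-t}h)$ and the totality of the stochastic exponents $\cE(\lambda g)$, $\lambda\in\mathbb R$, one gets $T_t\big(\phi((g,\xi))\big)=(Q_t\phi)((g,\xi))$, where $Q_t$ is the Ornstein--Uhlenbeck semigroup on $L^2(\mathbb R,\gamma_\sigma)$ given by Mehler's formula $Q_t\phi(x)=\mathbb E\,\phi\!\left(e^{-t}x+\sqrt{1-e^{-2t}}\,\sigma Z\right)$ with $Z$ standard Gaussian. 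Hence, writing $c=\mathbb Ef((g,\xi))$, the Bochner integral appearing in Lemma~\ref{lem4} equals $G((g,\xi))$ with $G(x)=\int_0^\infty\big(Q_tf(x)-c\big)\,dt$, the integral converging in $L^2(\gamma_\sigma)$ by the same exponential estimate as in the proof of Lemma~\ref{lem4}.

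Second, I would invoke the chain rule $D\big(G((g,\xi))\big)=G'((g,\xi))\,g$, which rests on the first--chaos identity $D(g,\xi)=g$. It is checked directly on Hermite polynomials: if $G(x)=\sum_k c_kH_k(x/\sigma)$ then the kernels of $G((g,\xi))$ are $c_k\sigma^{-k}g^{\otimes k}$, and inserting $\xi$ into $k-1$ slots, together with $H_k'=kH_{k-1}$, produces exactly $G'((g,\xi))g$; the general case follows from the density of polynomials and the closedness of $D$. Therefore $d\alpha=G'((g,\xi))\,g$, and the problem is reduced to computing the function $G'$ on $\mathbb R$.

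Third, I would determine $G$ through the Ornstein--Uhlenbeck generator. Since $Q_t$ multiplies the $n$-th Hermite component by $e^{-nt}$, one has $G=\sum_{n\ge1}\frac1n A_n(\xi,\dots,\xi)$ (as already noted in the proof of Lemma~\ref{lem4}), and applying the generator $\mathcal L$ of $Q_t$ returns $\mathcal L G=c-f$, where $\mathcal L\phi=\sigma^2\phi''-x\phi'$. To make this manipulation classical I would first assume, exactly as in Lemma~\ref{lem3}, that $f$ is continuous and compactly supported, so that the smoothing of $Q_t$ — the analogue of estimate \eqref{aaa} for the Gaussian kernel — makes $G$ of class $C^2$; the general $f$ is recovered afterwards by $L^2(\gamma_\sigma)$-approximation, using the boundedness of $d$ together with the elementary pointwise bound $\int_{\mathbb R}|f(y)||K(x,y)|\,dy\le\sigma^{-2}e^{x^2/2\sigma^2}\int_{\mathbb R}|f(y)|e^{-y^2/2\sigma^2}\,dy$ for the kernel $K$ below. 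Setting $v=G'$, the equation $\mathcal L G=c-f$ becomes the first--order linear ODE $\sigma^2v'-xv=c-f$; multiplying by $e^{-x^2/2\sigma^2}$ and integrating from $-\infty$ gives
\[
v(x)=e^{x^2/2\sigma^2}\Big(C+\tfrac1{\sigma^2}\int_{-\infty}^{x}e^{-y^2/2\sigma^2}\big(c-f(y)\big)\,dy\Big),
\]
and the constant is forced to $C=0$ by $v\in L^2(\gamma_\sigma)$, since the homogeneous solution $e^{x^2/2\sigma^2}$ is not square integrable against $\gamma_\sigma$ (the decay at $+\infty$ being automatic because $\int_{\mathbb R}e^{-y^2/2\sigma^2}(c-f(y))\,dy=0$ by the definition of $c$).

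Finally, using $\int_{-\infty}^{x}e^{-y^2/2\sigma^2}\,dy=\sqrt{2\pi}\,\sigma\,\Phi(x/\sigma)$ and $c=(\sqrt{2\pi}\,\sigma)^{-1}\int_{\mathbb R}f(z)e^{-z^2/2\sigma^2}\,dz$, the two terms in the bracket combine into a single integral over $\mathbb R$, yielding
\[
G'(x)=\int_{\mathbb R}f(y)\,\frac1{\sigma^2}\,e^{(x^2-y^2)/2\sigma^2}\Big(\Phi(x/\sigma)-1_{x>y}\Big)\,dy=:\int_{\mathbb R}f(y)K(x,y)\,dy.
\]
Substituting $x=(g,\xi)$, $\sigma=|g|$ and multiplying by $g$ gives the asserted formula for $d\alpha$; the minimal norm integral representation then follows immediately from Lemmas~\ref{lem1} and \ref{lem2}, which give $\alpha=\mathbb E\alpha+I(d\alpha)$ with $d\alpha=u(\alpha)$. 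The only genuine difficulty is technical — justifying the chain rule and the classical ODE computation when $f$ is merely square integrable against $\gamma_\sigma$ — and it is handled by the reduction to continuous compactly supported $f$ and the subsequent $L^2$-approximation described above, in the spirit of the proof of Lemma~\ref{lem3}.
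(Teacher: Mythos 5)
Your argument is correct and arrives at the right kernel, but it takes a genuinely different route from the paper's at the decisive step. Both proofs start from Lemma \ref{lem4} and Mehler's formula, reducing the problem to the one-dimensional function $x\mapsto\int_0^\infty \partial_x Q_t f(x)\,dt$. The paper then evaluates this time integral head-on: it interchanges the $dt$ and $dy$ integrations (after an explicit absolute-integrability estimate) and computes the kernel $\Delta_y(x)=\int_0^\infty\frac{e^{-t}}{1-e^{-2t}}(y-e^{-t}x)\,p_{1-e^{-2t}}(e^{-t}x-y)\,dt$ by an intricate change of variables, treated separately in the regimes $|y|>|x|$ and $|y|<|x|$, to verify the identity \eqref{03_09}. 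You instead recognize $G=\int_0^\infty Q_t(f-c)\,dt$ as the solution of the Poisson equation $\mathcal{L}G=c-f$ for the Ornstein--Uhlenbeck generator and solve the resulting first-order linear ODE for $v=G'$ by an integrating factor, fixing the constant through square-integrability against $\gamma_\sigma$. This is cleaner and more conceptual: it explains \emph{why} the kernel has the form $e^{(x^2-y^2)/2\sigma^2}\bigl(\Phi(x/\sigma)-1_{x>y}\bigr)$ (it is the $x$-derivative of the Green kernel of $\mathcal{L}$), where the paper's computation verifies the formula without motivating it. Your algebra checks out: the sign conventions for $\mathcal{L}$, the vanishing of $C$, and the recombination of the two terms using $c=(\sqrt{2\pi}\sigma)^{-1}\int f(z)e^{-z^2/2\sigma^2}dz$ all reproduce the stated expression, and the passage to the minimal norm representation via Lemmas \ref{lem1} and \ref{lem2} is exactly as in the paper.

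One step is glossed over: the claim that the smoothing of $Q_t$ makes $G$ of class $C^2$ for $f$ continuous and compactly supported. The natural bound is $\|\partial_x^2 Q_tf\|_\infty\lesssim(1-e^{-2t})^{-1}\sim(2t)^{-1}$ near $t=0$, which is not integrable, so $\int_0^\infty\partial_x^2Q_tf\,dt$ does not converge absolutely and $\mathcal{L}G=c-f$ cannot be read off termwise in the classical sense. This is repairable: the combination $\sigma^2\partial_x^2Q_tf-x\,\partial_xQ_tf=\partial_tQ_tf$ integrates as a telescoping derivative, while $x\,\partial_xQ_tf=O(t^{-1/2})$ is absolutely integrable, so the improper integral of $\partial_x^2Q_tf$ converges and the ODE does hold pointwise; alternatively one may verify directly that your explicit $v$ and $G'$ have the same Hermite coefficients in $L^2(\gamma_\sigma)$. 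As written, though, this regularity assertion needs a sentence of justification before the classical ODE manipulation is legitimate.
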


\begin{proof}

According to the lemma \ref{lem4},
$$
d\alpha =\int^\infty_0 D(T_t \alpha)dt.
$$
We will use well-known integral representation for $T_t\alpha$  \cite[\S 1.4]{10}
\begin{equation}
\label{lem5_1}
T_t\alpha =\mathbb{E}[f(e^{-t}(g,\xi)+\sqrt{1-e^{-2t}}(g,\xi'))/\xi],
\end{equation}
where $\xi'$ is a white noise in $H,$  independent from $\xi.$ Equivalently, 
$$
T_t\alpha =h((g,\xi)),
$$
where
$$
h(x)=\int_{\mathbb{R}} f(y)p_{(1-e^{-2t})|g|^2}(e^{-t}x-y)dy.
$$
From \eqref{aaa} it follows that
$$
h'(x)=\frac{e^{-t}}{(1-e^{-2t})|g|^2}\int_{\mathbb{R}}f(y)(y-e^{-t}x)p_{(1-e^{-2t})|g|^2}(e^{-t}x-y) dy.
$$
Using the representation \eqref{lem5_1}, $h'((g,\xi))$ can be written as
$$
h'((g,\xi))=\frac{e^{-t}}{\sqrt{1-e^{-2t}}|g|^2}\mathbb{E}[f(e^{-t}(g,\xi)+\sqrt{1-e^{-2t}}(g,\xi'))(g,\xi')/\xi].
$$
It follows that $\mathbb{E}(h'((g,\xi)))^2<\infty,$ and the stochastic derivative of $T_t\alpha$ equals
$$
D(T_t\alpha)=\frac{e^{-t}}{(1-e^{-2t})|g|^2}\bigg(\int_{\mathbb{R}}f(y)(y-e^{-t}(g,\xi))p_{(1-e^{-2t})|g|^2}(e^{-t}(g,\xi)-y) dy\bigg) g.
$$
Hence,
$$
d\alpha=\bigg( \int^\infty_0  \frac{e^{-t}}{(1-e^{-2t})|g|^2}\int_{\mathbb{R}}f(y)(y-e^{-t}(g,\xi))p_{(1-e^{-2t})|g|^2}(e^{-t}(g,\xi)-y) dy dt\bigg) g.
$$
The result of lemma \ref{lem4}  is not enough to interchage the order of integration. However, note that
$$
\mathbb{E}\int^\infty_0  \frac{e^{-t}}{(1-e^{-2t})|g|^2}\int_{\mathbb{R}}\big|f(y)(y-e^{-t}(g,\xi))\big|p_{(1-e^{-2t})|g|^2}(e^{-t}(g,\xi)-y) dy dt=
$$
$$
=\int^\infty_0 \frac{e^{-t}}{\sqrt{1-e^{-2t}}|g|^2} \mathbb{E} \mathbb{E} \big[\big|f(e^{-t}(g,\xi)+\sqrt{1-e^{-2t}}(g,\xi'))(g,\xi')\big|/\xi\big] dt=
$$
$$
=\int^\infty_0 \frac{e^{-t}}{\sqrt{1-e^{-2t}}|g|^2} \mathbb{E} \big|f(e^{-t}(g,\xi)+\sqrt{1-e^{-2t}}(g,\xi'))(g,\xi')\big| dt\leq
$$
$$
\leq \mathbb{E}(f((g,\xi)))^2\int^\infty_0 \frac{e^{-t}}{\sqrt{1-e^{-2t}}}dt <\infty.
$$
So, $d\alpha$ can be written as
$$
d\alpha=\bigg(\int_{\mathbb{R}}f(y)\frac{1}{|g|^2} \Delta_{\frac{y}{|g|}}\bigg(\frac{(g,\xi)}{|g|}\bigg)  dy \bigg) g,
$$
where
$$
\Delta_y(x)=\int^\infty_0  \frac{e^{-t}}{(1-e^{-2t})} (y-e^{-t}x)p_{(1-e^{-2t})}(e^{-t}x-y) dt=
$$
\begin{equation}
\label{02_09}
=\frac{1}{\sqrt{2\pi}}\int^\infty_0 \frac{e^{-t}}{(1-e^{-2t})^{3/2}} (y-e^{-t}x) e^{-\frac{(y-e^{-t}x)^2}{2(1-e^{-2t})}} dt.
\end{equation}
Hence, it is enough to check that the equality
\begin{equation}
\label{03_09}
\Delta_y(x)=e^{\frac{x^2-y^2}{2}}(\Phi(x)-1_{x>y})
\end{equation}
holds for a.a. $y\in \mathbb{R}.$ Consider $y\ne -x,x.$ The change of variables
$$
\tau=\sqrt{\frac{1+e^{-t}}{1-e^{-t}}}\frac{|y-x|}{2}-\sqrt{\frac{1-e^{-t}}{1+e^{-t}}}\frac{|y+x|}{2}
$$
leads to the formula
$$
\Delta_y(x)=\frac{1}{\sqrt{2\pi}} e^{-\frac{|y^2-x^2|+y^2-x^2}{2}}\frac{y+x}{|y+x|}
$$
$$
\int^\infty_{\frac{|y-x|-|y+x|}{2}}\frac{\frac{|y^2-x^2|+y^2-x^2}{2}+\tau^2-\tau\sqrt{\tau^2+|y^2-x^2|}}{|y^2-x^2|+\tau^2-\tau\sqrt{\tau^2+|y^2-x^2|}}e^{-\frac{\tau^2}{2}}d\tau.
$$
In the case $|y|>|x|$ one has
$$
\Delta_y(x)=\frac{y+x}{|y+x|}e^{\frac{x^2-y^2}{2}}\frac{1}{\sqrt{2\pi}}\int^\infty_{\frac{|y-x|-|y+x|}{2}}e^{-\frac{\tau^2}{2}}d\tau.
$$
In the case $|y|<|x|$ one has
$$
\Delta_y(x)=-\frac{y+x}{|y+x|}\frac{1}{\sqrt{2\pi}}\int^\infty_{\frac{|y-x|-|y+x|}{2}}\frac{\tau}{\sqrt{\tau^2+x^2-y^2}}e^{-\frac{\tau^2}{2}}d\tau=
$$
$$
=-\frac{y+x}{|y+x|}e^{\frac{x^2-y^2}{2}}\frac{1}{\sqrt{2\pi}}\int^\infty_{|x|}e^{-\frac{\tau^2}{2}}d\tau.
$$
Now the equality \eqref{03_09} is easily checked.
\end{proof}

\begin{example}
\label{ex03_09}
In the case when the white noise $\xi$ is generated by the Wiener process $(w(t))_{t\in [0,1]},$ lemmas \ref{lem3} and \ref{lem5} give two different representations for the random variable $f(w(1))\in L^2(\Omega,\mathcal{F},\mathbb{P})$ (see also remark \ref{rem2})
\begin{itemize}
\item
the Clark representation
$$
f(w(1))=\mathbb{E} f(w(1)) +\int^1_0\bigg(\int_{\mathbb{R}}f(y) \partial_x p_{1-t}(w(t)-y)dy\bigg)dw(t);
$$
\item 
the minimal norm integral representation
$$
f(w(1)) =\mathbb{E}f(w(1)) +\int^1_0\bigg(\int_{\mathbb{R}}f(y)e^{\frac{w(1)^2-y^2}{2}}(\Phi(w(1))-1_{w(1)>y})dy\bigg)  dw(t).
$$
\end{itemize}
Notice that the integral with respect to the Wiener process in the first representation is the It\^o stochastic integral and in the second one it is the extended stochastic integral.
\end{example}

\section{Gaussian integrators }

In this section, we recall the definition of Gaussian integrators and give some examples. We also find a formula that represents them explicitly as white noise functionals.

 \begin{defn} \cite{1}
 A centered Gaussian process $x(t),\ t\in[0; 1]$ is said to be an integrator if there exists a constant $c>0$ such that for an arbitrary partition $0=t_0<t_1<\ldots<t_n=1$ and real numbers $a_0, \ldots, a_{n-1} $
 \begin{equation}
 \label{a}
 \mathbb{E}\Big(\sum^{n-1}_{k=0}a_k(x(t_{k+1})-x(t_k))\Big)^2\leq c\sum^{n-1}_{k=0}a^2_k\Delta t_k.
 \end{equation}
 \end{defn}
 \begin{example} {\bf Wiener process}
$$ x(t)=w(t),\ t\in[0;1] $$
Really,
$$
 \mathbb{E}\Big(\sum^{n-1}_{k=0}a_k(w(t_{k+1})-w(t_k))\Big)^2= \sum^{n-1}_{k=0}a^2_k\Delta t_k.
$$
\end{example}
 \begin{example} {\bf Brownian bridge}
$$
x(t)=w(t)-tw(1),\ t\in[0;1]
$$
Let us check inequality \eqref{a} for the Brownian bridge. One can see that

$$
\mathbb{E}\Big(\sum^{n-1}_{k=0}a_k(w(t_{k+1})-w(t_k)-(t_{k+1}-t_k)w(1)\Big)^2\leq
$$
$$
\leq 2\mathbb{E}\Big(\sum^{n-1}_{k=0}a_k(w(t_{k+1})-w(t_k)\Big)^2+2\mathbb{E}\Big(\sum^{n-1}_{k=0}a_k(t_{k+1}-t_k)w(1)\Big)^2=
$$
$$
=2\sum^{n-1}_{k=0}a^2_k\Delta t_k+2(\sum^{n-1}_{k=0}a_k\Delta t_k)^2\leq
$$
$$
\leq 4\sum^{n-1}_{k=0}a^2_k\Delta t_k.
$$
\end{example}
 \begin{example} {\bf Fractional Brownian motion}
$$
x(t)=B^\alpha_t,\  \alpha\in(0;1),\ t\in[0;1].
$$
Recall the definition. A zero mean Gaussian process with the
$$
\cov(B^\alpha_{t_1},B^\alpha_{t_2})=\frac{1}{2}(t^{2\alpha}_1+t^{2\alpha}_2-(t_2-t_1)^{2\alpha})
$$
is said to be a fractional Brownian motion, where $\alpha$ is called the Hurst index or Hurst parameter associated with the fractional Brownian motion.
Let us check whether the fractional Brownian motion $B^\alpha_t,\  \alpha\in(0;1),\ t\in[0;1]$ is an integrator. If $B^{\alpha}_t$ were integrator, then $c>0$ would exist such that
$$
\mathbb{E}(B^{\alpha}_{t_2}-B^{\alpha}_{t_1})^2=(t_2-t_1)^{2\alpha}\leq c(t_2-t_1).
$$
For $\alpha<\frac{1}{2}$
\begin{equation}
\label{b}
\lim_{t_2-t_1\to0}(t_2-t_1)^{2\alpha-1}=+\infty.
\end{equation}
 \eqref{b} implies that for $\alpha<\frac{1}{2}$ the fractional Brownian motion $B^{\alpha}_t$ is not an integrator. In the case of $\alpha=\frac{1}{2}$ the process $B^{\alpha}_t$ as a standard Brownian motion which is an integrator. Let us investigate the case $\alpha>\frac{1}{2}.$ To do that we will use the following statement.
\begin{lem}
The process $x$ is an integrator iff there exists $c>0$ such that for any two times continuously differentiable function $f$ on $[0; 1]$ with $f(0)=f(1)=0$ the following relation holds
\begin{equation}
\label{c}
\mathbb{E}\Big(\int^1_0x(t)f'(t)dt\Big)^2\leq c\int^1_0f^2(t)dt.
\end{equation}
\end{lem}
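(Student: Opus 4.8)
The plan is to read inequality \eqref{c} as a bound on the pairing $\int_0^1 x(t)f'(t)\,dt$, to connect this pairing to the increment sums in \eqref{a} via a discrete integration by parts (Abel summation), and then to move between the smooth test functions of \eqref{c} and the step functions implicit in \eqref{a} by density. A preliminary observation I would record first: both hypotheses force $\mathbb{E}(x(t)-x(s))^2\le c|t-s|$ — take a single non-degenerate interval in \eqref{a} — so $x$ is mean-square continuous and all Riemann sums below converge in $L^2(\Omega)$; moreover, since every quantity depends on $x$ only through its increments, one may assume $x(0)=0$, hence $\mathbb{E}x(t)^2\le ct$.

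For the implication ``$x$ integrator $\Rightarrow$ \eqref{c}'', I would fix $f\in C^2[0,1]$ with $f(0)=f(1)=0$, choose the partition $t_k=k/n$, and apply Abel summation:
$$
\sum_{k=0}^{n-1}x(t_k)\big(f(t_{k+1})-f(t_k)\big)=-\sum_{k=0}^{n-1}f(t_{k+1})\big(x(t_{k+1})-x(t_k)\big),
$$
the boundary contributions vanishing because $f(0)=f(1)=0$. The left-hand side tends in $L^2(\Omega)$ to $\int_0^1 x(t)f'(t)\,dt$ (here the bounds $\mathbb{E}x(t)^2\le ct$, $\mathbb{E}(x(t)-x(s))^2\le c|t-s|$ and the smoothness of $f$ are used), while the right-hand side is of the form $\sum_k a_k(x(t_{k+1})-x(t_k))$ with $a_k=-f(t_{k+1})$, so \eqref{a} bounds its second moment by $c\sum_k f(t_{k+1})^2\Delta t_k$, which converges to $c\int_0^1 f^2(t)\,dt$. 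Passing to the limit gives \eqref{c}.

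For the converse I would view $\Phi(f):=-\int_0^1 x(t)f'(t)\,dt$ as a linear map from $\{f\in C^2[0,1]:f(0)=f(1)=0\}$ to $L^2(\Omega)$ which, by \eqref{c}, satisfies $\|\Phi(f)\|_{L^2(\Omega)}\le\sqrt{c}\,\|f\|_{L^2[0,1]}$. Since this space of test functions is dense in $L^2([0,1])$ (it contains $C_c^\infty(0,1)$), $\Phi$ extends to a bounded operator $\overline{\Phi}\colon L^2([0,1])\to L^2(\Omega)$ with the same norm. Approximating $\mathbf{1}_{[s,t)}$ in $L^2([0,1])$ by smooth functions that vanish at $0$ and $1$ and equal $1$ off a shrinking neighbourhood of $\{s,t\}$, and using mean-square continuity of $x$ to identify the resulting mollified averages, one gets $\overline{\Phi}(\mathbf{1}_{[s,t)})=x(t)-x(s)$. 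Applying $\overline{\Phi}$ to the step function $g=\sum_{k=0}^{n-1}a_k\mathbf{1}_{[t_k,t_{k+1})}$ and invoking linearity and the norm bound then yields precisely $\mathbb{E}\big(\sum_k a_k(x(t_{k+1})-x(t_k))\big)^2\le c\sum_k a_k^2\Delta t_k$, i.e. \eqref{a}.

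The suppressed routine work consists of the two $L^2(\Omega)$-convergence statements. The point I expect to require the most care is the path regularity of $x$: the argument presupposes that $\int_0^1 x(t)f'(t)\,dt$ is a bona fide $L^2(\Omega)$ random variable and that the relevant Riemann sums and mollified integrals converge, which I would secure through mean-square continuity of $x$ — free from \eqref{a} in one direction, and most conveniently taken as a standing regularity hypothesis (as holds for the processes considered here) in the other.
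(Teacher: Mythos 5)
The paper states this lemma without any proof --- it appears inside the fractional Brownian motion example purely as a tool --- so there is no argument of the authors' to compare yours against; your proposal has to stand on its own, and it does. Both directions are sound: the Abel summation identity is exactly right (the boundary terms cancel because $f(0)=f(1)=0$), the $L^2(\Omega)$-convergence of the Riemann sums follows from $\mathbb{E}(x(t)-x(s))^2\le c|t-s|$ together with uniform continuity of $f'$ (via Minkowski's integral inequality), and the converse via the bounded extension of $f\mapsto -\int_0^1 x f'$ from $\{f\in C^2: f(0)=f(1)=0\}$ to all of $L^2([0,1])$, identified on indicators by mollified ramps, reproduces \eqref{a} with the same constant. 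Two remarks. First, your closing caveat about regularity is not merely cosmetic: the ``if'' direction is literally false without some standing continuity assumption on $x$ (modify a mean-square continuous integrator at a single time point, or multiply a Gaussian variable by $\mathbf{1}_{\mathbb{Q}}(t)$; inequality \eqref{c} survives unchanged while \eqref{a} fails), so mean-square continuity, or at least stochastic continuity, must be read as implicit in the lemma --- it does hold for every process the paper applies the lemma to. Second, your opening sentence asserting that \emph{both} hypotheses force $\mathbb{E}(x(t)-x(s))^2\le c|t-s|$ is justified only for \eqref{a}; under \eqref{c} alone that bound is recovered only after the mollification step, which itself presupposes mean-square continuity. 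Your final paragraph already resolves this tension correctly, but the preliminary observation should be phrased accordingly to avoid circularity.
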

Let us check that for $\alpha>\frac{1}{2}$ the process $B^\alpha_t$ satisfies the inequality \eqref{c}. Really,
$$
\mathbb{E}\Big(\int^1_0x(t)f'(t)dt\Big)^2=\int^1_0\int^1_0f'(t_1)f'(t_2)(t_2-t_1)^{2\alpha}dt_2dt_1=
$$
$$
=4\alpha(2\alpha-1)\int^1_0f(t_1)\int^1_{t_1}f(t_2)(t_2-t_1)^{2\alpha-2}dt_2dt_1.
$$
To check that for $\alpha>\frac{1}{2}$ the integral operator in $L_2([0;1])$ with the kernel $K(t_1, t_2)=(t_2-t_1)^{2\alpha-2}\mathbf{1}_{\{t_2>t_1\}}$ is bounded one can use the Shur test.
\begin{thm}\cite{2} [Shur test]
If there exist positive functions $p, q: [0; 1]\to(0; +\infty)
$
and
$\alpha, \beta>0$  such that
$$
\int^1_{0}k(s_1, s_2)q(s_2)ds_2\leq \alpha p(s_1),
$$
$$
\int^1_{0}k(s_1, s_2)p(s_1)ds_1\leq \beta q(s_2),
$$
then $k$  corresponds to the  bounded operator with the norm less or equal to $\alpha\beta.$
\end{thm}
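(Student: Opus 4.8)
The plan is to run the classical Cauchy--Schwarz proof of the Schur test. Write $T$ for the integral operator with kernel $k$, acting on $f\in L_2([0;1])$ by $(Tf)(s_1)=\int_0^1 k(s_1,s_2)f(s_2)\,ds_2$. Since $k\geq 0$, replacing $f$ by $|f|$ only increases $|Tf|$ pointwise, so it suffices to estimate $\|Tf\|_{L_2}$ for nonnegative $f$. The first step is to factor the integrand symmetrically through the weight $q$,
$$
k(s_1,s_2)\,f(s_2)=\bigl(k(s_1,s_2)\,q(s_2)\bigr)^{1/2}\cdot\Bigl(k(s_1,s_2)\,\tfrac{f(s_2)^2}{q(s_2)}\Bigr)^{1/2},
$$
and to apply the Cauchy--Schwarz inequality in the variable $s_2$. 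Together with the first hypothesis $\int_0^1 k(s_1,s_2)q(s_2)\,ds_2\leq\alpha\,p(s_1)$ this yields the pointwise bound
$$
(Tf)(s_1)^2\leq\Bigl(\int_0^1 k(s_1,s_2)q(s_2)\,ds_2\Bigr)\Bigl(\int_0^1 k(s_1,s_2)\tfrac{f(s_2)^2}{q(s_2)}\,ds_2\Bigr)\leq\alpha\,p(s_1)\int_0^1 k(s_1,s_2)\tfrac{f(s_2)^2}{q(s_2)}\,ds_2.
$$

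The second step is to integrate this inequality over $s_1\in[0;1]$ and to interchange the order of integration, which is legitimate by Tonelli's theorem since $k,p,q$ and $f^2$ are nonnegative and measurable. The inner integral then becomes $\int_0^1 k(s_1,s_2)\,p(s_1)\,ds_1$, which by the second hypothesis is at most $\beta\,q(s_2)$; after cancelling $q(s_2)$ one is left with
$$
\|Tf\|_{L_2}^2\leq\alpha\int_0^1\frac{f(s_2)^2}{q(s_2)}\Bigl(\int_0^1 k(s_1,s_2)p(s_1)\,ds_1\Bigr)ds_2\leq\alpha\beta\int_0^1 f(s_2)^2\,ds_2=\alpha\beta\,\|f\|_{L_2}^2.
$$
In particular $Tf\in L_2([0;1])$ for every $f\in L_2([0;1])$, so $T$ is a well-defined bounded linear operator on $L_2([0;1])$, with norm at most $\sqrt{\alpha\beta}$, which gives the asserted boundedness.

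There is no deep obstacle here; the points that need a word of care are purely measure-theoretic. First, in the Cauchy--Schwarz step the right-hand integral $\int_0^1 k(s_1,s_2)f(s_2)^2/q(s_2)\,ds_2$ may be infinite for some $s_1$, in which case the displayed pointwise inequality is trivial; since $q>0$ on $[0;1]$ by hypothesis the quotient $f(s_2)^2/q(s_2)$ is a well-defined nonnegative measurable function, Tonelli still applies, and if the final bound turns out to be $+\infty$ there is nothing to prove. Second, one should observe that $T$ is genuinely linear and that the estimate is homogeneous, so it passes from nonnegative $f$ to all of $L_2([0;1])$. Finally, for the application at hand --- the kernel $K(t_1,t_2)=(t_2-t_1)^{2\alpha-2}\mathbf{1}_{\{t_2>t_1\}}$ with Hurst index $\alpha>\tfrac{1}{2}$ --- the hypotheses already hold with the constant weights $p\equiv q\equiv 1$: since $2\alpha-2>-1$ the singularity along the diagonal is integrable and both $\int_{s_1}^1(s_2-s_1)^{2\alpha-2}\,ds_2$ and $\int_0^{s_2}(s_2-s_1)^{2\alpha-2}\,ds_1$ are bounded by $(2\alpha-1)^{-1}$ on $[0;1]$; hence the operator is bounded, inequality \eqref{c} holds, and $B^\alpha_t$ is an integrator for $\alpha>\tfrac{1}{2}$.
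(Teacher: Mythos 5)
Your argument is correct, but note that the paper itself offers no proof of this statement: the Schur test is quoted from the reference [Halmos--Sunder] and simply applied, so there is nothing internal to compare against. What you give is the classical Cauchy--Schwarz/Tonelli proof, and it is sound: the symmetric factorization through the weight $q$, the use of the first hypothesis to bound $\int_0^1 k(s_1,s_2)q(s_2)\,ds_2$ by $\alpha p(s_1)$, the Tonelli interchange (legitimate since everything is nonnegative), and the second hypothesis to absorb $\int_0^1 k(s_1,s_2)p(s_1)\,ds_1\leq\beta q(s_2)$ are all in order, as is the reduction to nonnegative $f$ (which tacitly uses $k\geq 0$; this hypothesis is not written in the paper's statement but is standard for the Schur test and holds for the kernel actually used). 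One small discrepancy worth flagging: your computation yields $\|T\|\leq\sqrt{\alpha\beta}$, which is the classical form of the conclusion, whereas the paper asserts the bound $\alpha\beta$; the two are not comparable in general (only when $\alpha\beta\geq 1$ does $\sqrt{\alpha\beta}\leq\alpha\beta$), so the paper's stated constant looks like a slip, and your version is the correct one. Since only boundedness is needed for the application, this does not matter; your verification that the fractional-Brownian kernel $K(t_1,t_2)=(t_2-t_1)^{2\alpha-2}\mathbf{1}_{\{t_2>t_1\}}$ satisfies the hypotheses with $p\equiv q\equiv 1$ and constant $(2\alpha-1)^{-1}$ for Hurst index $\alpha>\tfrac12$ is also correct.
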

By using the Shur test one can see that for $\alpha>\frac{1}{2}$ the integral operator in $L_2([0;1])$ with the kernel $K(t_1, t_2)=(t_2-t_1)^{2\alpha-2}\mathbf{1}_{\{t_2>t_1\}}$ is bounded. It implies that for $\alpha>\frac{1}{2}$ the fractional Brownian motion $B^\alpha_t$ is an integrator.
\end{example}
The following two statements also give examples of integrators.
 \begin{lem}
 Suppose that Gaussian process $x$ is continuously differentiable almost surely. Then $x$ is an integrator.
 \end{lem}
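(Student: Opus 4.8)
The plan is to realize the increments of $x$ as integrals of its derivative, $x(t_{k+1})-x(t_k)=\int_{t_k}^{t_{k+1}}x'(s)\,ds$, so that the left–hand side of \eqref{a} becomes a quadratic form built from the covariance of the derivative process. First I would record that $x'=\{x'(t):t\in[0,1]\}$ is again a centered Gaussian process: each $x'(t)$ is an almost sure limit of the Gaussian variables $(x(t+h)-x(t))/h$, hence Gaussian, and finite families are jointly Gaussian for the same reason; by hypothesis $x'$ has a.s.\ continuous sample paths on $[0,1]$, so in particular the map $(s,\omega)\mapsto x'(s,\omega)$ is jointly measurable.

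The crucial point is the integrability claim $c:=\sup_{t\in[0,1]}\mathbb{E}\,x'(t)^2<\infty$. One route is Fernique's theorem applied to the a.s.\ bounded Gaussian process $x'$, which gives $\mathbb{E}\sup_{t\in[0,1]}x'(t)^2<\infty$ and a fortiori the uniform variance bound. Alternatively, a.s.\ path–continuity forces $x'(t_n)\to x'(t)$ almost surely whenever $t_n\to t$, hence in $L^2$ (for jointly Gaussian families almost sure convergence entails $L^2$–convergence), so $t\mapsto\mathbb{E}\,x'(t)^2$ is continuous on the compact interval $[0,1]$ and therefore bounded. In either case, Cauchy--Schwarz yields $|K(s,u)|\le c$ for the covariance kernel $K(s,u):=\mathbb{E}[x'(s)x'(u)]$.

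Now fix a partition $0=t_0<\ldots<t_n=1$ and reals $a_0,\ldots,a_{n-1}$, and put $\varphi=\sum_{k=0}^{n-1}a_k\mathbf{1}_{[t_k,t_{k+1})}$. Pathwise, the fundamental theorem of calculus gives $\sum_{k=0}^{n-1}a_k\big(x(t_{k+1})-x(t_k)\big)=\int_0^1\varphi(s)x'(s)\,ds$. Since $\varphi$ is bounded and $\sup_s\mathbb{E}\,x'(s)^2<\infty$, Fubini's theorem applies and gives $\mathbb{E}\big(\int_0^1\varphi\,x'\big)^2=\int_0^1\!\int_0^1\varphi(s)\varphi(u)K(s,u)\,ds\,du$. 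Using $|K|\le c$ together with $\big(\int_0^1|\varphi|\big)^2\le\int_0^1\varphi^2$ (Cauchy--Schwarz on an interval of length one) one obtains $\mathbb{E}\big(\sum_{k=0}^{n-1}a_k(x(t_{k+1})-x(t_k))\big)^2\le c\int_0^1\varphi^2=c\sum_{k=0}^{n-1}a_k^2\,\Delta t_k$, which is precisely \eqref{a}.

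The main obstacle is the integrability step: almost sure continuous differentiability says nothing a priori about $L^2$–control of $x'$, so one genuinely needs a Gaussian concentration input (Fernique's theorem, or the equivalence of modes of convergence for Gaussian sequences) to pass from almost sure boundedness of $x'$ to the uniform variance bound; everything after that is routine. One could equally argue through the criterion \eqref{c} stated above, integrating by parts $\int_0^1 x(t)f'(t)\,dt=-\int_0^1 x'(t)f(t)\,dt$ for $f$ with $f(0)=f(1)=0$ and bounding $\mathbb{E}(\int_0^1 x'f)^2\le c\int_0^1 f^2$ in the same way, but the integrability issue encountered is identical.
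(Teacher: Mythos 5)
Your proof is correct and follows essentially the same route as the paper's: write the increments as $\int_0^1\varphi(s)x'(s)\,ds$, invoke the Gaussian integrability of the a.s.\ continuous derivative (the paper cites Fernique via Kuo to get $\mathbb{E}(\sup_{[0,1]}|x'|)^2<\infty$ and bounds pathwise, while you bound the covariance kernel by $\sup_t\mathbb{E}\,x'(t)^2$), and finish with Cauchy--Schwarz on the unit interval. Your second justification of the variance bound --- continuity of $t\mapsto\mathbb{E}\,x'(t)^2$ from the fact that a.s.\ convergence of Gaussian variables implies $L^2$-convergence --- is a nice, slightly more elementary alternative to Fernique, but the argument is otherwise the same.
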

\begin{proof}
Consider the process $x^{\prime}$. It is continuous Gaussian process which can be considered  as  a  Gaussian  random  element  in  $C ([0; 1])$. Consequently  the  uniform norm of $x^{\prime}$ is square integrable \cite{17}. Then
$$
\mathbb{E}\Big(\sum^{n-1}_{k=0}a_k(x(t_{k+1})-x(t_k))\Big)^2\leq
$$
$$
\leq \mathbb{E}(\sup_{[0;1]}x^{\prime})^2\sum^{n-1}_{k=0}a^2_k\Delta t_k
$$
\end{proof}
 \begin{lem}
 Let $\xi$ be a white noise in $L_2([0;1])$ and $L\subset L_2([0;1]).$ Denote by $\mathfrak{F}_{L}=\sigma((\varphi,\xi),\ \varphi\in L).$ Then $x(t)=\mathbb{E}(w(t)/\mathfrak{F}_L)$ is an integrator.
 \end{lem}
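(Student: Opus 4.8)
The plan is to identify the conditional expectation $x(t)=\mathbb{E}(w(t)/\mathfrak{F}_L)$ with an orthogonal projection inside $L_2([0;1])$, and then to repeat the computation from the Wiener process example, losing only a factor equal to the operator norm of that projection, which is at most $1$. Concretely: recall that the Wiener process is recovered from the white noise by $w(t)=(\1_{[0,t]},\xi)$, so that the map $h\mapsto(h,\xi)$ is an isometry of $L_2([0;1])$ onto the first chaos carrying increments of $w$ to disjointly supported indicators. Let $M\subset L_2([0;1])$ be the closed linear span of $L$ and let $P$ be the orthogonal projection onto $M$. The claim I would establish is that
$$
x(t)=\mathbb{E}(w(t)/\mathfrak{F}_L)=(P\1_{[0,t]},\xi),\qquad t\in[0;1],
$$
which in particular exhibits $\{x(t):t\in[0;1]\}$ as a centered Gaussian process (it lives in the first chaos).

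The key step, and the only one I expect to need care, is this identification. First one observes that $\mathfrak{F}_L=\sigma((\varphi,\xi),\ \varphi\in M)$: passing to finite linear combinations of the generators does not change the $\sigma$-algebra, and neither does passing to $L_2$-limits, since $\varphi_n\to\varphi$ in $L_2([0;1])$ forces $(\varphi_n,\xi)\to(\varphi,\xi)$ in $L^2(\Omega,\mathcal F,\mathbb P)$. Then one invokes the standard description of conditioning inside a Gaussian Hilbert space: for a centered Gaussian variable $\eta$ in the first chaos, $\mathbb{E}(\eta/\sigma((\varphi,\xi),\varphi\in M))$ is the orthogonal $L^2$-projection of $\eta$ onto the closed linear span $\{(\varphi,\xi):\varphi\in M\}$. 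Transporting this projection through the isometry $h\mapsto(h,\xi)$ yields the formula above. This is routine but is the conceptual heart of the argument.

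Finally, granting the formula, fix a partition $0=t_0<t_1<\ldots<t_n=1$ and reals $a_0,\ldots,a_{n-1}$, and set $g=\sum_{k=0}^{n-1}a_k\1_{(t_k,t_{k+1}]}\in L_2([0;1])$. By linearity of $P$ and of $h\mapsto(h,\xi)$,
$$
\sum_{k=0}^{n-1}a_k\big(x(t_{k+1})-x(t_k)\big)=(Pg,\xi),
$$
hence, using $\mathbb{E}(\psi,\xi)^2=|\psi|^2$, the bound $\|P\|\le 1$, and the fact that the indicators $\1_{(t_k,t_{k+1}]}$ have pairwise disjoint supports,
$$
\mathbb{E}\Big(\sum_{k=0}^{n-1}a_k\big(x(t_{k+1})-x(t_k)\big)\Big)^2=|Pg|^2\le|g|^2=\sum_{k=0}^{n-1}a_k^2\,\Delta t_k.
$$
Thus $x$ satisfies the integrator inequality with constant $c=1$, which completes the proof.
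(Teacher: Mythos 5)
Your proof is correct, but it takes a genuinely different route from the paper's. The paper argues directly via conditional Jensen: by linearity, $\sum_k a_k(x(t_{k+1})-x(t_k))=\mathbb{E}\big(\sum_k a_k(w(t_{k+1})-w(t_k))/\mathfrak{F}_L\big)$, and then $\mathbb{E}\big(\mathbb{E}(\eta/\mathfrak{F}_L)\big)^2\le\mathbb{E}\,\eta^2$ gives the integrator inequality with $c=1$ in three lines, with no need to identify the conditional expectation explicitly (the Gaussianity of $x$ is simply asserted there). You instead prove the identification $x(t)=(P\mathbf{1}_{[0,t]},\xi)$, where $P$ is the orthogonal projection onto $\overline{\mathrm{span}}\,L$, using the standard fact that conditioning a first-chaos variable on a Gaussian family is $L^2$-projection onto the closed span of that family; the inequality then follows from $\|P\|\le 1$. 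The paper's argument is shorter and its contraction step works for an arbitrary sub-$\sigma$-field; yours requires the Gaussian-space projection lemma (which does hold here, since everything lies in the first chaos of the same white noise and your reduction from $L$ to its closed span is justified). In exchange, your approach buys more: it makes the Gaussianity of $x$ transparent rather than asserted, and it exhibits the explicit representation $x(t)=(A\mathbf{1}_{[0,t]},\xi)$ with $A=P$, i.e.\ it simultaneously verifies, for this example, the structural characterization of integrators given in the Proposition of the same section. Both proofs are complete and yield the same constant $c=1$.
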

 \begin{proof}
Note that $ x(t), t\in [0,1] $ is Gaussian process. Let us check that it satisfies inequality \eqref{a}. Using Jensen inequality one can check that 
 $$
\mathbb{E}\Big(\sum^{n-1}_{k=0}a_k(x(t_{k+1})-x(t_k)\Big)^2 \leqslant  \mathbb{E}\Big(\sum^{n-1}_{k=0}a_k(\mathbb{E}(w(t_{k+1})-w(t_k)/\mathfrak{F}_L)\Big)^2\leq
$$
$$
\leq \mathbb{E}\left( \mathbb{E}\Big(\sum^{n-1}_{k=0}a_k(w(t_{k+1})-w(t_k)\Big)^2/\mathfrak{F}_L \right) =
$$
$$
=\mathbb{E}\Big(\sum^{n-1}_{k=0}a_k(w(t_{k+1})-w(t_k)\Big)^2 = \sum^{n-1}_{k=0}a^2_k\Delta t_k.
$$
 \end{proof}
The following statement describes the structure of integrators.

 \begin{prop}
 The centered Gaussian process $x(t),\ t\in[0; 1]$ is an integrator iff there exist Gaussian white noise $\xi$ in $L_2([0; 1])$ and continuous linear operator $A$ in the same space such that
 \begin{equation}
 \label{d}
x(t)=(A\mathbf{1}_{[0; t]}, \xi),\ t\in[0; 1].
\end{equation}
\end{prop}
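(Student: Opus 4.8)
The plan is to establish the two implications separately, with the forward direction (integrator $\Rightarrow$ representation \eqref{d}) being the substantial one. For the easy direction, suppose $x(t)=(A\mathbf{1}_{[0;t]},\xi)$ for some white noise $\xi$ in $L_2([0;1])$ and bounded linear operator $A$. Then for any partition and coefficients $a_k$,
$$
\mathbb{E}\Big(\sum^{n-1}_{k=0}a_k(x(t_{k+1})-x(t_k))\Big)^2=\Big|A\sum^{n-1}_{k=0}a_k\mathbf{1}_{(t_k;t_{k+1}]}\Big|^2\leq\|A\|^2\Big|\sum^{n-1}_{k=0}a_k\mathbf{1}_{(t_k;t_{k+1}]}\Big|^2=\|A\|^2\sum^{n-1}_{k=0}a^2_k\Delta t_k,
$$
since the indicator functions $\mathbf{1}_{(t_k;t_{k+1}]}$ are orthogonal in $L_2([0;1])$ with $|\mathbf{1}_{(t_k;t_{k+1}]}|^2=\Delta t_k$. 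So \eqref{a} holds with $c=\|A\|^2$, and $x$ is Gaussian because it is a collection of linear images of a Gaussian family; this proves the ``if'' part.

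For the ``only if'' part, I would work with the linear span of increments. Let $\mathcal{S}\subset L_2([0;1])$ be the dense subspace of step functions, which I identify with (equivalence classes of) finite linear combinations $\sum a_k\mathbf{1}_{(t_k;t_{k+1}]}$. Define a linear map $\Lambda$ on $\mathcal{S}$ by sending $\mathbf{1}_{(s;t]}$ to the increment $x(t)-x(s)\in L^2(\Omega)$ and extending linearly; this is well defined on $\mathcal{S}$ because distinct representations of the same step function differ only by refinement of the partition, under which the increments add correctly. The integrator inequality \eqref{a} says precisely that $\|\Lambda f\|_{L^2(\Omega)}\leq\sqrt{c}\,|f|_{L_2([0;1])}$ for all $f\in\mathcal{S}$, so $\Lambda$ extends to a bounded linear operator $\Lambda:L_2([0;1])\to L^2(\Omega)$ whose image $\mathcal{H}:=\overline{\mathrm{ran}\,\Lambda}$ is a closed Gaussian subspace of $L^2(\Omega)$ (closed limits of centered Gaussians are centered Gaussian). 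Now choose a white noise $\xi$ in $L_2([0;1])$ on a possibly enlarged probability space: concretely, take an orthonormal basis $(e_k)$ of $L_2([0;1])$ and an independent i.i.d. standard Gaussian sequence $(\eta_k)$ (independent of everything), and set $(\xi,h):=\sum_k(h,e_k)\eta_k$ augmented so that we have ``enough'' Gaussian randomness; more efficiently, one can realize $\xi$ so that $\mathcal{H}\subset\overline{\mathrm{span}}\{(\xi,h):h\in L_2([0;1])\}$. Then $\Lambda$, viewed as a bounded operator into this Gaussian Hilbert space, corresponds via the canonical isometry ``$(\xi,h)\leftrightarrow h$'' to a bounded operator $A$ on $L_2([0;1])$: precisely, since $h\mapsto(\xi,h)$ is an isometry from $L_2([0;1])$ onto the first chaos and $\Lambda f$ lies in the Gaussian space generated by $\xi$, there is a unique $Af\in L_2([0;1])$ with $(\xi,Af)=\Lambda f$, and $f\mapsto Af$ is linear and bounded with $\|A\|\leq\sqrt c$. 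By construction $x(t)-x(0)=\Lambda\mathbf{1}_{[0;t]}=(\xi,A\mathbf{1}_{[0;t]})$, and since $x(0)=0$ a.s. (a centered Gaussian with $\mathbb{E}x(0)^2\leq c\cdot 0=0$ by taking the trivial partition, or more carefully by \eqref{a} applied on $[0,t_1]$ with $t_1\to 0$), we obtain \eqref{d}.

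The main obstacle is the passage from the abstract bounded operator $\Lambda:L_2([0;1])\to L^2(\Omega)$ with Gaussian range to an operator $A$ acting \emph{within} $L_2([0;1])$ together with a genuine white noise $\xi$: one must argue that the closed Gaussian subspace $\mathcal{H}$ can be isometrically embedded into the first-chaos space of some white noise $\xi$ in $L_2([0;1])$, which may require enlarging $\Omega$ when $\mathcal{H}$ is ``smaller'' than a full copy of $L_2([0;1])$ (e.g. for the Brownian bridge, where increments live in a proper subspace). The clean way to handle this is: let $\mathcal{H}_0$ be \emph{any} Gaussian Hilbert space isometric to $L_2([0;1])$ and containing an isometric copy of $\mathcal{H}$ (take, if necessary, the orthogonal sum of $\mathcal{H}$ with an independent Gaussian space of the right dimension), fix a unitary $U:L_2([0;1])\to\mathcal{H}_0$, and define $(\xi,h):=U h$ and $A:=U^{-1}\Lambda$. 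Then $A$ is bounded with $\|A\|=\|\Lambda\|\le\sqrt c$, and \eqref{d} holds on the product space. One should also remark that the representation is non-unique (different $(\xi,A)$ give the same $x$), which is exactly the feature exploited later for the multiple Clark representations of the local time. A minor technical point worth a sentence is checking that $\Lambda$ is well defined on $\mathcal{S}$ modulo the null-function identification, i.e. that if $\sum a_k\mathbf{1}_{(t_k;t_{k+1}]}=0$ in $L_2([0;1])$ then $\sum a_k(x(t_{k+1})-x(t_k))=0$ in $L^2(\Omega)$ — this again follows from \eqref{a} since the right-hand side of the inequality vanishes.
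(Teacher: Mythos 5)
Your proof is correct and takes essentially the same route as the paper's: the easy direction is the identical norm computation, and for the converse both arguments turn the increment map $\mathbf{1}_{(s;t]}\mapsto x(t)-x(s)$ into a bounded operator via \eqref{a}, identify the closed Gaussian span of the increments isometrically with a subspace of $L_2([0;1])$, adjoin an independent white noise on the orthogonal complement to get a full white noise $\xi$, and set $A$ equal to the composition of the increment map with that isometry (your $U^{-1}\Lambda$ is the paper's $jB$). The only shared soft spot is that both you and the paper implicitly need $x(0)=0$, which does not literally follow from \eqref{a}; your limiting argument only shows $x(t)\to x(0)$ in $L^2$, so this should simply be added to the hypotheses.
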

\begin{proof}
Suppose that the Gaussian process $x$ has a representation \eqref{d}, then
$$
\mathbb{E}\Big(\sum^{n-1}_{k=0}a_k(x(t_{k+1})-x(t_k))\Big)^2=
\mathbb{E}\Big(\sum^{n-1}_{k=0}a_k(A\mathbf{1}_{[t_k; t_{k+1}]}, \xi)\Big)^2=
$$
$$
=\Big\|A\sum^{n-1}_{k=0}a_k\mathbf{1}_{[t_k; t_{k+1}]}\Big\|^2\leq
\|A\|^2\sum^{n-1}_{k=0}a^2_k(t_{k+1}-t_k).
$$
Inversely, suppose that $x$ is an integrator. Denote by $\ov{LS\{x\}}$ the closure of the linear span of values of $x.$ $\ov{LS\{x\}}$ is a separable Hilbert space with respect to square mean norm. It means that there exists an isomorphic embedding $j: \ \ov{LS\{x\}}\to L_2([0; 1]).$ Denote by $H_1=j(\ov{LS\{x\}}).$ Then $L_2([0; 1])$ can be represented as a direct sum $H_1\oplus H_2.$ Suppose that $\xi_2$ is a Gaussian white noise in $H_2,$ which is independent of $x.$ Put $(h_1, \xi_1)=j^{-1}(h_1)$ and $(h, \xi)=(h_1, \xi_1)+(h_2, \xi_2),\ h_1\in H_1,\  h_2\in H_2,\ h\in L_2([0; 1]).$ The independence of $\xi_2$ and $x$ implies that $\xi$ is a white  noise in $L_2([0; 1]).$ Define a continuous linear operator $B: L_2([0; 1])\to\ov{LS\{x\}}$ by the rule
$$
B\Big(\sum^{n-1}_{k=0}a_k\mathbf{1}_{[t_k; t_{k+1}]}\Big)\mapsto\sum^{n-1}_{k=0}a_k(x(t_{k+1})-x(t_k)).
$$
Then for an operator $A=jB$
$$
B\mathbf{1}_{[0; t]}=x(t)=(jx(t), \xi)=(A\mathbf{1}_{[0; t]}, \xi).
$$
Since $x$ satisfies \eqref{a}, for any step function $f\in L_2[0; 1]$ the next inequality holds

\begin{equation}
\label{e}
\|Af\|^2\leq c\|f\|^2.
\end{equation}

The set of all step functions on $[0; 1]$ is dense in $L_2([0; 1]).$ Consequently \eqref{e} ends the proof.
\end{proof}
{\bf Examples}
\begin{enumerate}
\item  Wiener process $A=I$
\item  Brownian bridge $A=I-P,$ where $P$ is a projection on $\mathbf{1}_{[0;1]}$
\item  Fractional Brownian motion for $\alpha>\frac{1}{2}$, $A$ is the integral operator in $L_2([0;1])$ with the kernel $K(t_1, t_2)=(t_2-t_1)^{2\alpha-2}\mathbf{1}_{\{t_2>t_1\}}$.
\end{enumerate}

We end this section by introducing the notion of extended stochastic integral with respect to integrators and recalling one of its useful properties. For this let $\xi$ be a Gaussian white noise in $L_2([0; 1]).$  $\xi$ can be considered as a formal derivative of the Wiener process $w(t)=(\mathbf{1}_{[0, t]}, \xi), t\in[0, 1].$ In \cite{1,5} the extended stochastic integral for random function $y$ from $L_2([0; 1])$ was defined as a regularized product $(y, \xi)$ which is denoted also by $ \displaystyle \int^1_0y(s)dw(s).$
 In such terms the extended stochastic integral with respect to the integrator $x$ which has the representation 
\eqref{d} can be defined as follows
 $$
 \int^1_0y(s)dx(s):=(Ay, \xi)=\int^1_0(Ay)(s)dw(s).
 $$
 It is a part of the general framework on action of Gaussian random operator on random elements \cite{3, 4}. In details the stochastic calculus for integrators was considered in \cite{1, 5}. Here we need the following important relationship between the extended stochastic integral and operators of second quantization. We consider, for this purpose, a continuous linear operator $B$ in $L_2([0,1])$ such that $ \|B\| \; \leqslant \; 1 $. Then, we have 
 \begin{thm} \label{1}{\rm \cite{5}}
 Suppose that the square-integrable random element $y$ in $L_2([0; 1])$ belongs to the domain of the extended stochastic integral. Then $\Gamma(B)y$ belongs to the domain of the extended stochastic integral with respect to the integrator $x=\Gamma(B)w,$ i.e.
 $$
 \Gamma(B)\int^1_0y(t)dw(t)=\int^1_0(\Gamma(B)y)(t)dx(t).
 $$
 \end{thm}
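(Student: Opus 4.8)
The plan is to reduce the statement to a commutation identity between the second quantization operator $\Gamma(B)$, the extended integral $I$ with respect to $w$, and the operator $B$ itself, and then to verify that identity chaos by chaos, where it turns into elementary algebra of symmetric tensors.

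First I would pin down the integral on the right-hand side. Since $\Gamma(B)$ transforms a first-order polynomial $(h,\xi)$ into $(Bh,\xi)$, and $w(t)=(\1_{[0,t]},\xi)$ is a first-order polynomial, the integrator $x=\Gamma(B)w$ has the representation $x(t)=(B\1_{[0,t]},\xi)$. Hence, by the definition of the extended integral with respect to an integrator, $\int_0^1 z(s)\,dx(s)=I(Bz)$ for any $z$ in the domain of $I$, where $B$ acts on the $L_2([0,1])$-valued random element $z$. The claim therefore becomes: for every $y\in\mathrm{Dom}(I)$ the element $B\,\Gamma(B)y$ lies in $\mathrm{Dom}(I)$ and
$$\Gamma(B)\,I(y)=I\big(B\,\Gamma(B)y\big),$$
where on the left $\Gamma(B)$ is applied to the scalar variable $I(y)$ (legitimate, since $\|B\|\le1$), and on the right it is applied to the $L_2([0,1])$-valued $y$, extended as $\Gamma(B)\otimes\mathrm{id}$, which is still a contraction because $\|B\|\le1$.

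Then I would compute on each Wiener chaos, using the description of $I$ from the proof of Lemma \ref{lem2}. Write $y=\sum_{n\ge0}Y_n$, with $Y_n$ the projection onto the chaos of order $n$; it is described by a symmetric Hilbert--Schmidt kernel $g_n$ that is symmetric in the $n$ ``Wiener'' arguments and carries one additional slot for the $L_2([0,1])$-value, and $I(Y_n)$ is the polynomial of degree $n+1$ with kernel $\widehat{g_n}=\mathrm{Sym}(g_n)$ (symmetrization over all $n+1$ slots), so that $\sum_n(n+1)!\,\|\widehat{g_n}\|^2_{n+1}=\mathbb E\,I(y)^2<\infty$. On one side, $\Gamma(B)$ acts on the chaos of degree $n+1$ by applying $B$ diagonally to all $n+1$ slots, so $\Gamma(B)I(Y_n)$ has kernel $B^{\otimes(n+1)}\widehat{g_n}$. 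On the other side, $\Gamma(B)Y_n$ has kernel with $B$ applied to the $n$ Wiener slots of $g_n$, and $B\,\Gamma(B)Y_n$ applies $B$ to the value slot too, giving kernel $B^{\otimes(n+1)}g_n$; hence $I(B\,\Gamma(B)Y_n)$ has kernel $\mathrm{Sym}(B^{\otimes(n+1)}g_n)$. Because the tensor power $B^{\otimes(n+1)}$ commutes with the symmetrization projector, $\mathrm{Sym}(B^{\otimes(n+1)}g_n)=B^{\otimes(n+1)}\widehat{g_n}$, which matches. Summing over $n$ yields the identity, and the domain statement follows from
$$\mathbb E\,I\big(B\,\Gamma(B)y\big)^2=\sum_{n\ge0}(n+1)!\,\big\|B^{\otimes(n+1)}\widehat{g_n}\big\|^2_{n+1}\le\sum_{n\ge0}(n+1)!\,\|\widehat{g_n}\|^2_{n+1}=\mathbb E\,I(y)^2<\infty,$$
since $\|B^{\otimes(n+1)}\|\le\|B\|^{n+1}\le1$.

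The step I expect to be the real obstacle is the bookkeeping behind the first reduction: showing rigorously that $\Gamma(B)$ applied to the scalar variable $I(y)$ coincides with $\Gamma(B)\otimes\mathrm{id}$ applied to $y$ and then integrated, i.e. that $\Gamma(B)$ intertwines $I$ with $I\circ B$. Everything hinges on the two facts used above (that $\Gamma(B)$ acts on each chaos as the diagonal tensor power of $B$ on the Wiener slots, and that this tensor power commutes with symmetrization), so the chaos computation is where that care has to be spent. A shorter but less self-contained route would be to test the identity against the stochastic exponentials $\cE(h)$, using $D\cE(h)=h\,\cE(h)$, $I=D^*$, the action of second quantization on $\cE(h)$ recalled in Section 1, and the commutation of $\Gamma(B)$ with scalar products against a fixed $h\in L_2([0,1])$; this version requires the additional fact that the exponentials form a core for $D$, which the chaos argument sidesteps.
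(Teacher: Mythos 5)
The paper does not actually prove this theorem: it is quoted from reference \cite{5} and used as a black box, so there is no in-paper argument to compare against. Your proof is correct and self-contained. The reduction is right: since $\Gamma(B)(h,\xi)=(Bh,\xi)$, the integrator $x=\Gamma(B)w$ has representation $x(t)=(B\1_{[0,t]},\xi)$, and by the paper's definition of the integral with respect to an integrator the claim becomes $\Gamma(B)I(y)=I(B\,\Gamma(B)y)$. The chaos-by-chaos verification then works exactly as you describe: with the paper's convention $A_n^B(\vf_1,\ldots,\vf_n)=A_n(B^*\vf_1,\ldots,B^*\vf_n)$, the operator $\Gamma(B)$ acts on a degree-$m$ kernel as $B^{\otimes m}$, the action of $I$ on a degree-$n$ $H$-valued polynomial is symmetrization of the $(n{+}1)$-slot kernel (this is precisely the description used in the proof of Lemma \ref{lem2}), and $B^{\otimes(n+1)}$ commutes with the symmetrization projector, so the two kernels agree. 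The one point your write-up leans on implicitly and should state is the characterization of $\mathrm{Dom}(I)$ for $I=D^*$: an $H$-valued element with symmetrized kernels $\widehat{g_n}$ lies in $\mathrm{Dom}(I)$ if and only if $\sum_n (n+1)!\,\|\widehat{g_n}\|^2_{n+1}<\infty$; you need the ``only if'' direction to read off $\sum_n(n+1)!\|\widehat{g_n}\|^2=\mathbb{E}\,I(y)^2<\infty$ from $y\in\mathrm{Dom}(I)$, and the ``if'' direction to conclude $B\,\Gamma(B)y\in\mathrm{Dom}(I)$ from your contraction estimate. That equivalence follows from the same duality computation against stochastically differentiable $\beta$ that the paper performs in Lemma \ref{lem2}, so it is available with the tools at hand; with it made explicit, your argument is complete. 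Your alternative route through stochastic exponentials would also work but, as you note, needs the extra fact that the exponentials form a core, which the chaos argument avoids.
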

 This statement gives a possibility in \cite{5} to obtain the anticipating stochastic PDE for second quantization of the certain functionals from the diffusion processes. In this paper, we will apply Theorem 12 in order to obtain the integral representation for the local time of the one-dimensional integrator ( see Theorem 15 ).

\section{Chaotic expansion for the local time of integrators  }
\begin{flushleft}

Consider the Gaussian integrator $x$  given by \eqref{d} where the wite  noise $\xi$ is generated by the Wiener process $ \lbrace w(t), t \in [0,1] \rbrace$ . Assume that  $A$ is continuously invertible and $ \| A \| \leqslant 1 $. Denote by $\ell(u)$ the local time of the integrator $x$ in $u$ up to time $1$. As a white noise functional, this local time has a chaotic expansion which is our subject of investigation in this section. The following lemma gives the answer. 

\end{flushleft}

We use the notation $\sigma(t) $ instead of $\| A\mathbf{1}_{[0,t]} \|.$ We also put $  t_{n}^{*} = \max\lbrace t_{1},\ldots,t_{n} \rbrace  $ and recall that $ \displaystyle  H_{n} $ is the n-th Hermite polynomial, already defined in section 1 . 

\begin{lem}
 For every $ u \in \R $, the It\^{o} Wiener expansion of  $\displaystyle \ell(u)$ is 
 \[ \ell(u)= \sum_{n=0}^{\infty} \Gamma (A)\left( \int _{\Delta_{n}}a_{n}(t_{1},\ldots,t_{n})dw(t_{1})\ldots dw(t_{n})\right) \]
 where the symmetric kernels $\lbrace a_n \rbrace  $ are given by 
 \[ a_{n}(t_{1},\ldots,t_{n})= \int_{t_{n}^{*}}^{1}\dfrac{1}{\sigma(t)^{n}}H_{n}\left( \dfrac{u}{\sigma(t)}\right)p_{\sigma(t)^{2}}(u)\mathrm{d}t. \]

\end{lem}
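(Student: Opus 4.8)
The plan is a mollification argument: put $\ell_\varepsilon(u):=\int_0^1 p_\varepsilon(x(t)-u)\,dt$, compute the It\^o--Wiener expansion of $\ell_\varepsilon(u)$ for $\varepsilon>0$, and let $\varepsilon\to0^+$, using that $\ell_\varepsilon(u)\to\ell(u)$ in $L^2$ (which holds because $A$ is continuously invertible). For a fixed $t\in(0,1)$ the variable $x(t)=(A\1_{[0,t]},\xi)$ is centred Gaussian with variance $\sigma(t)^2$, so $x(t)/\sigma(t)$ is standard Gaussian and $p_\varepsilon(\,\cdot-u)$ has a Hermite expansion in it, whose coefficients I would compute from the three elementary identities $\mathbb{E}[g(\sigma Z)H_n(Z)]=\sigma^{n}\mathbb{E}[g^{(n)}(\sigma Z)]$ for $Z\sim N(0,1)$, $\partial^{n}_x p_s(x)=(-1)^{n}s^{-n/2}H_n(x/\sqrt s)p_s(x)$, and $\int_{\mathbb R}p_\varepsilon(x-u)p_{\sigma(t)^2}(x)\,dx=p_{\sigma(t)^2+\varepsilon}(u)$. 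This yields
\[
p_\varepsilon(x(t)-u)=\sum_{n=0}^{\infty}\frac{\sigma(t)^{n}}{n!}\,(\sigma(t)^2+\varepsilon)^{-n/2}\,H_n\!\Big(\frac{u}{\sqrt{\sigma(t)^2+\varepsilon}}\Big)\,p_{\sigma(t)^2+\varepsilon}(u)\;H_n\!\Big(\frac{x(t)}{\sigma(t)}\Big).
\]

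Next I would transfer the random factor $H_n(x(t)/\sigma(t))$ to the Wiener process $w$. Since $x=\Gamma(A)w$ and $\Gamma(A)$ maps the $n$-th chaos into itself by $\Phi_n\mapsto\Phi_n(A^{*}\,\cdot,\ldots,A^{*}\,\cdot)$, the $n$-fold multiple integral of $(A\1_{[0,t]})^{\otimes n}$ equals $\Gamma(A)$ of the multiple integral of $\1_{[0,t]}^{\otimes n}$; combined with the classical identity expressing an iterated It\^o integral over a simplex through a Hermite polynomial, this gives $\sigma(t)^{n}H_n(x(t)/\sigma(t))=n!\,\Gamma(A)\big(\int_{0\leqslant s_1\leqslant\ldots\leqslant s_n\leqslant t}dw(s_1)\cdots dw(s_n)\big)$. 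Substituting into the Hermite expansion, the coefficient multiplying $\Gamma(A)\big(\int_{0\leqslant s_1\leqslant\ldots\leqslant s_n\leqslant t}dw(s_1)\cdots dw(s_n)\big)$ becomes $b_n^\varepsilon(t):=(\sigma(t)^2+\varepsilon)^{-n/2}H_n(u/\sqrt{\sigma(t)^2+\varepsilon})\,p_{\sigma(t)^2+\varepsilon}(u)$; using that $\Gamma(A)$ is bounded (because $\|A\|\leqslant1$) and commutes with the $n$-th chaos projection, the $n$-th chaos component of $\ell_\varepsilon(u)$ is $\Gamma(A)$ applied to $\int_0^1 b_n^\varepsilon(t)\big(\int_{0\leqslant s_1\leqslant\ldots\leqslant s_n\leqslant t}dw(s_1)\cdots dw(s_n)\big)\,dt$. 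A stochastic Fubini theorem (legitimate because $b_n^\varepsilon$ is bounded on $[0,1]$ for $\varepsilon>0$) rewrites this inner quantity as $\int_{\Delta_n}a^{\varepsilon}_n(t_1,\ldots,t_n)\,dw(t_1)\cdots dw(t_n)$ with $a^{\varepsilon}_n(t_1,\ldots,t_n)=\int_{t_n^{*}}^{1}b_n^\varepsilon(t)\,dt$, so that $\ell_\varepsilon(u)=\sum_{n}\Gamma(A)\big(\int_{\Delta_n}a^{\varepsilon}_n\,dw(t_1)\cdots dw(t_n)\big)$.

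Finally I would pass to the limit $\varepsilon\to0^{+}$. Because $\ell_\varepsilon(u)\to\ell(u)$ in $L^2$, each chaos projection converges, and since $\Gamma(A)$ is continuous and preserves the chaos order it suffices to prove $a^{\varepsilon}_n\to a_n$ in $L^2(\Delta_n)$, where $a_n$ is the kernel in the statement; the integrands converge pointwise, so the issue is to produce an $L^2(\Delta_n)$ dominating function. This is the step where the hypotheses genuinely matter: continuous invertibility of $A$ yields $\sigma(t)^2\asymp t$ near $t=0$, which controls the singularity of the integrand defining $a^{\varepsilon}_n$ there --- for $u\neq0$ the integrand $b_n^\varepsilon(t)$ is bounded uniformly in $\varepsilon\in(0,1]$ and $t\in[0,1]$, and for $u=0$ only even $n$ survive and a dimension count on $\Delta_n$ still provides a square-integrable bound. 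Passing to the limit chaos by chaos then delivers the stated expansion (and, incidentally, $\sum_n\mathbb{E}\big(\Gamma(A)\int_{\Delta_n}a_n\,dw(t_1)\cdots dw(t_n)\big)^2=\mathbb{E}\,\ell(u)^2<\infty$). I expect this last passage --- the uniform control of $a^{\varepsilon}_n$ near the origin --- to be the main obstacle; the algebra of the first two steps is routine once the normalisations are fixed.
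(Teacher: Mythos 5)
Your proposal is correct and follows the same skeleton as the paper: mollify the delta function, expand $p_\varepsilon(x(t)-u)$ in Hermite polynomials of $x(t)/\sigma(t)$ (your coefficient $\frac{\sigma(t)^n}{n!}(\sigma(t)^2+\varepsilon)^{-n/2}H_n(u/\sqrt{\sigma(t)^2+\varepsilon})\,p_{\sigma(t)^2+\varepsilon}(u)$ agrees with the paper's), identify the $n$-th term as $\Gamma(A)$ applied to a multiple Wiener integral over the simplex via $\sigma(t)^nH_n(x(t)/\sigma(t))=(A\mathbf{1}_{[0,t]})^{\otimes n}(\xi,\ldots,\xi)=\Gamma(A)\big(\mathbf{1}_{[0,t]}^{\otimes n}(\xi,\ldots,\xi)\big)$, and let $\varepsilon\to 0^+$. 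The one place where you genuinely diverge is the convergence argument. The paper proves directly that the limiting series $\sum_n\frac{1}{n!}\int_0^1H_n(u/\sigma(t))p_{\sigma(t)^2}(u)H_n(x(t)/\sigma(t))\,dt$ converges in $L^2$, which requires two nontrivial inputs: the Szeg\H{o} bound $\sup_x|H_n(x)e^{-\alpha x^2}|\leq c\sqrt{n!}\,n^{-(8\alpha-1)/12}$ and the Gram-determinant (local nondeterminism) inequality $G(A\mathbf{1}_{[0,s]},A\mathbf{1}_{[0,t]})\geq c(2)\,G(\mathbf{1}_{[0,s]},\mathbf{1}_{[0,t]})$ from reference [7], which controls $\int\int\frac{|\lambda|^n}{\sqrt{1-|\lambda|}\,\sigma(s)\sigma(t)}$ and yields an explicit decay rate for the chaos norms. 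You sidestep all of this by using the continuity of chaos projections together with the assumed $L^2$-convergence $\ell_\varepsilon(u)\to\ell(u)$: you only need $a_n^\varepsilon\to a_n$ in $L^2(\Delta_n)$ for each fixed $n$, and square-summability of the chaos components then comes for free from $\mathbb{E}\,\ell(u)^2<\infty$. This is a legitimate and cleaner route given that the $L^2$-existence of the local time is taken as known (both you and the paper assume it, citing [7]); what you lose is the quantitative estimate (\ref{i}) on $\|a_n\|_{L^2(\Delta_n)}$, which the paper obtains as a by-product and which is the kind of information one wants downstream. Your domination argument for $a_n^\varepsilon\to a_n$ is sound: for $u\neq 0$ the factor $e^{-u^2/2(\sigma(t)^2+\varepsilon)}$ absorbs the polynomial blow-up as $\sigma(t)\to 0$, and for $u=0$ the bound $\sigma(t)^2\geq t/\|A^{-1}\|^2$ gives $|a_n^\varepsilon|\lesssim (t_n^*)^{-(n-1)/2}$, whose square integrates to $1/(n-1)!$ over the ordered simplex — exactly the role inequality (\ref{11}) plays in the paper.
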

\begin{proof}

The local time of $x$ at the point $u$ is given by 
\[ \ell(u)= L^{2}-\lim _{\varepsilon \rightarrow 0^{+}} \int_{0}^{1}p_{\varepsilon}(x(t)-u)\mathrm{d}t . \]
The integrand in the right hand side has the chaotic expansion 
\[p_{\varepsilon}(x(t)-u)=  \sum_{n=0}^{\infty} \dfrac{1}{n!} \left( \dfrac{\sigma(t)}{\sqrt{\sigma(t)^{2}+\varepsilon} }\right)^{n} H_{n}\left( \dfrac{u}{\sqrt{\sigma(t)^{2}+\varepsilon}}\right)p_{\sigma(t)^{2}+\varepsilon}(u) H_{n} \left( \dfrac{x(t)}{\sigma(t)}\right)  \]
In fact,
$$
\int_{\R}p_{\varepsilon}(\sigma(t)v-u)H_{n}(v)p_{1}(v)dv = (-1)^{n}\int_{\R}p_{\varepsilon}(\sigma(t)v-u)p_{1}^{(n)}(v)dv = $$
$$  = \int_{\R}\dfrac{\partial ^{n}}{\partial v^{n}}p_{\varepsilon}(\sigma(t)v-u)p_{1}(v)dv  = (-\sigma(t)) ^{n} \int_{\R}\dfrac{\partial ^{n}}{\partial u^{n}} p_{\varepsilon}(\sigma(t)v-u)p_{1}(v)dv =$$
$$ = (-\sigma(t)) ^{n} \dfrac{\partial ^{n}}{\partial u^{n}} \int_{\R}p_{\varepsilon}(\sigma(t)v-u)p_{1}(v)dv = (-\sigma(t)) ^{n} \dfrac{\partial ^{n}}{\partial u^{n}} p_{\varepsilon + \sigma(t)^{2}}(u) = $$ 
$$ =\left( \dfrac{\sigma (t)}{\sqrt{\sigma(t)^{2}+\varepsilon}}\right) ^{n} H_{n}\left( \dfrac{u}{\sqrt{\sigma(t)^{2}+\varepsilon}} \right)p_{\sigma(t)^{2}+\varepsilon}(u). $$
Now we obtain 
\begin{equation}
\label{g}
 \ell(u)=\sum _{n=0}^{\infty}\dfrac{1}{n!}\int_{0}^{1}H_{n}\left( \dfrac{u}{\sigma(t)} \right)p_{\sigma(t)^{2}}(u) H_{n}\left( \dfrac{x(t)}{\sigma(t)} \right)\mathrm{d}t 
\end{equation}
This series has orthogonal summands and converges in $ L_{2}$. Orthogonality can be easily checked using the formula \cite{10}  
$$ \displaystyle \mathbb{E} \left[ H_{n}\left( \dfrac{x(t)}{\sigma(t)} \right) H_{m}\left( \dfrac{x(s)}{\sigma(s)} \right)\right] = 0 \quad \forall \quad n \neq m . $$
Concerning the convergence, we use the following formula \cite{10}  

\[ \mathbb{E} \left[ H_{n}\left( \dfrac{x(t)}{\sigma(t)} \right) H_{n}\left( \dfrac{x(s)}{\sigma(s)} \right)\right] = n! \lambda ^{n} \]
where $$ \displaystyle \lambda = \dfrac{\left(  A\mathbf{1}_{[0,t]},A\mathbf{1}_{[0,s]} \right)}{\sigma(t)\sigma(s)} , $$ 
and the estimate  \cite{8} 
\begin{equation}
\label{h}
 \forall\; \alpha \; \in\; \left[ \dfrac{1}{4},\dfrac{1}{2}\right] \quad \exists \; c\;>\; 0 \; ; \; \sup_{x \in \R} \left | H_{n}(x)e^{-\alpha x^{2}} \right |\; \leq \; c \; \sqrt{n!} \; n^{-\frac{8\alpha -1}{12}}\quad \forall \; n \; \in \mathbb{N}^{*}   
\end{equation} 
\begin{flushleft}
 to obtain
\end{flushleft}
\[ \dfrac{1}{n!^{2}} \mathbb{E}\left[ \int_{0}^{1}H_{n}\left( \dfrac{u}{\sigma(t)} \right)p_{\sigma(t)^{2}}(u) H_{n}\left( \dfrac{x(t)}{\sigma(t)} \right)\mathrm{d}t\right] ^{2} \leqslant  \]
\[ \leqslant \dfrac{C}{\sqrt{n}} \int_{0}^{1}\int_{0}^{1} \dfrac{|\lambda^{n}| }{\sigma(s)\sigma(t)} \mathrm{d}t \mathrm{d}s = \dfrac{2C}{\sqrt{n}} \int_{0}^{1}\int_{s}^{1} \dfrac{| \lambda |^{n} }{\sigma(s)\sigma(t)} \mathrm{d}t \mathrm{d}s\]
for some constant $ C > 0 $.\\
Let us introduce the function 
 
\[f (x)= \sum _{n=1}^{\infty}\dfrac{x^{n}}{\sqrt{n}} \quad , \; 0\leqslant x < 1  .\]
We have 
$$ \dfrac{f(x)}{1-x}=\sum _{n=1}^{\infty}\left( \sum_{k=1}^{n}\dfrac{1}{\sqrt{k}} \right) x^{n} . $$
By using the inequality 
 $$  2(\sqrt{n+1}-1) = \int_1^{n+1}\dfrac{dx}{\sqrt{x}} \; \leqslant \; \sum_{k=1}^{n}\dfrac{1}{\sqrt{k}} $$
we can get 
$$ \dfrac{f'(x)}{f(x)+2}\; \leqslant  \; \dfrac{1}{2(1-x)} .$$
Which leads to  
$$ f(u) \; \leqslant \; \dfrac{2}{\sqrt{1-u}} - 2 \qquad \forall \quad 0 \; \leqslant \; u \; < \; 1  .$$
The integrability of $\displaystyle \int_{0}^{1}\int_{s}^{1} \dfrac{1 }{\sigma(s)\sigma(t)} \mathrm{d}t \mathrm{d}s $ is a consequence of the inequality 
\begin{equation}
\label{11}
\sigma(t)\geqslant \dfrac{\sqrt{t}}{\parallel A^{-1} \parallel} .
\end{equation}
Now it suffices to prove the convergence of the integral 
$$ \int_{0}^{1}\int_{s}^{1} \dfrac{1}{\sqrt{1-|\lambda |}\; \sigma(s)\sigma(t)} \; \mathrm{d}t \mathrm{d}s  $$
Let us use the inequality 
$$ G \left(  A\mathbf{1}_{[0,s]},A\mathbf{1}_{[0,t]} \right) \; \geqslant \; c(2)G \left( \mathbf{1}_{[0,s]},\mathbf{1}_{[0,t]} \right) $$
from \cite{7}, where $G$ denotes the Gram determinant. One can check that for $s<t$
$$ \dfrac{1}{\sqrt{1-|\lambda |}\; \sigma(s)\sigma(t)} = \dfrac{\sqrt{ 1+| \lambda| }}{\sqrt{G \left(  A\mathbf{1}_{[0,s]},A\mathbf{1}_{[0,t]} \right)}} $$
 $$ \leqslant \dfrac{\sqrt{2}}{\sqrt{c(2)}\; \sqrt{s(t-s)}} $$
and the latter expression is clearly integrable.

Moreover, the orthogonal expansion given by \eqref{g}  is simply the It\^{o} Wiener expansion we are looking for. Indeed, the n-th term of the sum is the action on the white noise of the following symmetric Hilbert Shmidt form 
$$  A_{n}(h_{1},\cdots,h_{n}) = \dfrac{1}{n!}\int_{0}^{1}H_{n}\left( \dfrac{u}{\sigma(t)} \right)p_{\sigma(t)^{2}}(u) \dfrac{1}{\sigma(t)^{n}}(A\mathbf{1}_{[0,t]})^{\bigotimes n}(h_{1},\cdots,h_{n})\mathrm{d}t = $$
$$ = \dfrac{1}{n!}\int_{0}^{1}H_{n}\left( \dfrac{u}{\sigma(t)} \right)p_{\sigma(t)^{2}}(u) \dfrac{1}{\sigma(t)^{n}}(\mathbf{1}_{[0,t]})^{\bigotimes n}(A^{*}h_{1},\cdots,A^{*}h_{n})\mathrm{d}t = $$
$$ = \dfrac{1}{n!}\int_{[0,1]^{n+1}} H_{n}\left( \dfrac{u}{\sigma(t)} \right)p_{\sigma(t)^{2}}(u) \dfrac{1}{\sigma(t)^{n}}\mathbf{1}_{[0,t]^{n}}(\vec{s})(A^{*}h_{1})(s_{1})\cdots (A^{*}h_{n})(s_{n})\mathrm{d}t \mathrm{d}\vec{s} = $$
$$ = \dfrac{1}{n!}\int_{[0,1]^{n}} \left[ \int_{s_{n}^{*}}^{1} H_{n}\left( \dfrac{u}{\sigma(t)} \right)p_{\sigma(t)^{2}}(u) \dfrac{1}{\sigma(t)^{n}} \mathrm{d}t \right](A^{*}h_{1})(s_{1}) \cdots   ( A^{*}h_{n})(s_{n}) \mathrm{d}\vec{s} = $$
$$  = B_n^A (h_1,\ldots,h_n)$$
 where $ s_{n}^{*}=\max \lbrace s_{1},\ldots,s_{n}\rbrace ,  \vec{s}=(s_{1},\ldots,s_{n}), \; h_{1},\cdots,h_{n} \in L_{2}([0,1]) $ and $ B_n $ is the symmetric Hilbert Shmidt form associated to the kernel 
$$ b_{n}(\vec{s}) = \dfrac{1}{n!} \int_{s_{n}^{*}}^{1} H_{n}\left( \dfrac{u}{\sigma(t)} \right)p_{\sigma(t)^{2}}(u) \dfrac{1}{\sigma(t)^{n}} \mathrm{d}t = \dfrac{1}{n!}a_{n}(\vec{s}) \; .$$
Notice that the square integrability of $ b_n $ can be confirmed using the estimate \eqref{h} and the inequality \eqref{11}. Really,
$$\int_{[0,1]^{n}} b_n^2(\vec{s})\mathrm{d}\vec{s} = \dfrac{1}{n!} \int_{\Delta _n (1)}\left[  \int_{s_{n}^{*}}^{1} H_{n}\left( \dfrac{u}{\sigma(t)} \right)p_{\sigma(t)^{2}}(u) \dfrac{1}{\sigma(t)^{n}} \mathrm{d}t \right] ^{2} \mathrm{d}\vec{s} = $$
$$  = \dfrac{n}{2 \pi \; n!^{\; 2}} \int_{0}^1 s^{n-1 }  \left[ \int_{s}^{1}  H_{n} \left( \dfrac{u}{\sigma(t)} \right) \exp \left( -\dfrac{1}{2} \left( \dfrac{u}{\sigma(t)} \right)^2 \right) \dfrac{1}{ \sigma(t)^{n+1} } \; \mathrm{d}t \right]  ^2 \; \mathrm{d}s \leqslant $$
$$  \leqslant \dfrac{c \; n }{2\pi \; n! \; \sqrt{n}} \int_{0}^1 s^{n-1 } \left[ \int_{s}^{1} \dfrac{1}{ \sigma(t)^{n+1} } \; \mathrm{d}t \right]  ^2 \; \mathrm{d}s \leqslant $$
$$ \leqslant \dfrac{c \; \sqrt{ n} \;\parallel A^{-1} \parallel ^{2n+2} }{2\pi\; n!} \int_{0}^1 s^{n-1 } \left[   \int_{s}^{1} \dfrac{1}{ \sqrt{t}^{\;n+1} } \; \mathrm{d}t \right] ^{2}\; \mathrm{d}s \leqslant $$
\begin{equation}
\label{i}
  \leqslant  c' \dfrac{ \sqrt{ n} \;\parallel A^{-1} \parallel ^{2n+2} }{n! \;  (n-1)^{2}}  \left( 1+ \dfrac{1}{n} - \dfrac{4}{n+1} \right) .
\end{equation}

It follows that 
$$ \dfrac{1}{n!}\int_{0}^{1}H_{n}\left( \dfrac{u}{\sigma(t)} \right)p_{\sigma(t)^{2}}(u) H_{n}\left( \dfrac{x(t)}{\sigma(t)} \right)\mathrm{d}t = A_{n}( \xi,\ldots,\xi) = $$
$$ =  \Gamma (A)\left( \int _{\Delta_{n}}a_{n}(t_{1},\ldots,t_{n})dw(t_{1})\ldots dw(t_{n})\right) $$
which ends the proof.    
\end{proof}

\section{Integral representations for the local time of integrators}

 We present here some integral representations for the local time of integrators. A first representation is devoted to the Wiener process. Denote by $\ell_{w}(u,t)$ the local time of the Wiener process $(w(t))_{t\geq 0}$ at point $u $ up to time $t$. As we mentioned in the introduction, it is an $L^2-$limit
$$
\ell_w(u,t)=\lim_{\varepsilon\to 0}\ell_{w,\varepsilon}(u,t),
$$
where $ \displaystyle \ell_{w,\varepsilon}(u,t)=\int^t_0 p_\varepsilon (w(r)-u)dr.$ 

\begin{lem}
The Clark representation formula for the local time of the Wiener process is
$$
\ell_w(u,t)=\int^t_0 p_r(u)dr + \int^t_0 \int^t_r p'_{s-r}(w(r)-u)ds dw(r)
$$
\end{lem}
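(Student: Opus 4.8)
The plan is to obtain the Clark representation for the mollified local time $\ell_{w,\ve}(u,t)=\int_0^t p_\ve(w(r)-u)\,dr$ and then pass to the limit $\ve\to 0^+$ inside the It\^o integral. For fixed $r\in[0,t]$, the random variable $p_\ve(w(r)-u)$ is a square integrable functional of $w$; writing $f_{\ve,u}(x)=p_\ve(x-u)$, Lemma~\ref{lem3} (applied on the interval $[0,r]$ rather than $[0,1]$, which is the obvious rescaling of that lemma, or equivalently the It\^o formula applied to $(s,x)\mapsto P_{r-s}f_{\ve,u}(x)$ as in the proof of Lemma~\ref{lem3}) gives
$$
p_\ve(w(r)-u)=\mathbb{E}\,p_\ve(w(r)-u)+\int_0^r \partial_x P_{r-s}f_{\ve,u}(w(s))\,dw(s).
$$
Here $\mathbb{E}\,p_\ve(w(r)-u)=(P_r f_{\ve,u})(0)=p_{r+\ve}(u)$, and by the semigroup property $P_{r-s}f_{\ve,u}=P_{\ve+(r-s)}\delta_u$ in the sense that $\partial_x P_{r-s}f_{\ve,u}(y)=p'_{\ve+r-s}(y-u)$. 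Then I would integrate this identity in $r$ over $[0,t]$ and interchange the $dr$ integral with the stochastic integral (a stochastic Fubini argument), obtaining
$$
\ell_{w,\ve}(u,t)=\int_0^t p_{r+\ve}(u)\,dr+\int_0^t\left(\int_r^t p'_{\ve+s-r}(w(r)-u)\,ds\right)dw(r).
$$

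Next I would let $\ve\to 0^+$. The left-hand side converges in $L^2$ to $\ell_w(u,t)$ by definition. The drift term $\int_0^t p_{r+\ve}(u)\,dr$ converges to $\int_0^t p_r(u)\,dr$ (for $u\neq 0$ by dominated convergence; for $u=0$ one checks $\int_0^t p_r(0)\,dr=\int_0^t(2\pi r)^{-1/2}\,dr<\infty$ and monotone/dominated convergence still applies). Since the left side and the drift converge in $L^2$, the It\^o integrals converge in $L^2$ as well, and by the It\^o isometry the integrands converge in $L^2([0,t]\times\Omega)$; it remains to identify the limit of $\int_r^t p'_{\ve+s-r}(w(r)-u)\,ds$ as $\int_r^t p'_{s-r}(w(r)-u)\,ds$. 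After the change of variable $v=s-r$ this is $\int_0^{t-r}p'_{\ve+v}(w(r)-u)\,dv=\int_\ve^{t-r+\ve}p'_v(w(r)-u)\,dv$, so the difference from the candidate limit is a boundary layer $\int_0^\ve p'_v(w(r)-u)\,dv$ plus an endpoint term, both of which I would show tend to $0$ in $L^2([0,t]\times\Omega)$ using the explicit bound $|p'_v(y)|\le C v^{-1}\,p_{2v}(y)$ and the fact that $\mathbb{E}\,p_{2v}(w(r)-u)^2 = p_{2r+4v'}(\cdot)$-type quantities are integrable in $(r,v)$ near $v=0$ after multiplication by $v^{-1}$ only marginally — so some care is needed here.

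The main obstacle is precisely this last $L^2$-convergence of the integrands near the diagonal $s=r$: $p'_{s-r}$ has a non-integrable-looking singularity, and one must use the cancellation/averaging in the $ds$-integral together with the Gaussian smoothing from $w(r)$ to control it; equivalently one can argue that $d\alpha$ depends continuously (in $L^2$) on $\alpha$ by Lemma~\ref{lem1}, so it suffices to know $\ell_{w,\ve}(u,t)-\int_0^t p_{r+\ve}(u)\,dr \to \ell_w(u,t)-\int_0^t p_r(u)\,dr$ in $L^2$ and then the integrands automatically converge to $d(\ell_w(u,t))$, after which one separately verifies that $d(\ell_{w,\ve}(u,t))$ equals the stochastic Fubini expression above and that this expression converges pointwise (in $(r,\omega)$) to $\int_r^t p'_{s-r}(w(r)-u)\,ds$, upgrading to $L^2$ by uniform integrability. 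This route avoids the delicate direct estimate and is the one I would pursue.
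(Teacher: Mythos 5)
Your main route is correct and arrives at exactly the same mollified integrand as the paper, $u_\varepsilon(r)=\int_r^t p'_{\varepsilon+s-r}(w(r)-u)\,ds$, but by a genuinely different derivation. The paper does not slice in $r$ and apply the one-dimensional Clark formula plus a stochastic Fubini; instead it notes that $\ell_{w,\varepsilon}(u,t)$ is stochastically differentiable with $D(\ell_{w,\varepsilon}(u,t))(r)=\int_r^t p'_\varepsilon(w(s)-u)\,ds$, applies the Clark--Ocone formula $u_\varepsilon(r)=\mathbb{E}[D(\ell_{w,\varepsilon}(u,t))(r)\,/\,\mathcal{F}^w_r]$, and evaluates the conditional expectation using independence of increments, which after an integration by parts in $y$ gives the same kernel. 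Your approach trades Clark--Ocone for a stochastic Fubini theorem and the time-rescaled version of Lemma~\ref{lem3}; both are legitimate, and the paper is in fact \emph{less} explicit than you are about the final step: it simply says ``taking the limit $\varepsilon\to 0$ one obtains the needed expression.'' Your worry about the boundary layer is the right one to have, and it resolves more easily than you fear: with $v=s-r$ the discrepancy is $\int_0^\varepsilon p'_v(w(r)-u)\,dv$ plus a harmless endpoint term, and the substitution $\tau=y^2/v$ gives the uniform bound
$$
\int_0^\varepsilon |p'_v(y)|\,dv\;\le\;\frac{1}{\sqrt{2\pi}}\int_{y^2/\varepsilon}^\infty \tau^{-1/2}e^{-\tau/2}\,d\tau\;\le\;1,
$$
which tends to $0$ for every $y\ne 0$; since $w(r)\ne u$ a.s.\ for a.e.\ $r$, dominated convergence finishes the identification in $L^2([0,t]\times\Omega)$. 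No ``marginal integrability'' issue actually arises.

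One genuine flaw: your fallback via ``continuity of $d$'' does not work. The operator $d$ of Lemmas~\ref{lem1}--\ref{lem2} produces the \emph{minimal-norm Skorokhod} integrand, which is not the adapted Clark integrand (compare the two representations in Example~\ref{ex03_09}, or note that $\mathbb{E}|d\alpha|^2=\sum(n-1)!\|A_n\|_n^2$ whereas the adapted integrand has $L^2$-norm $\sum n!\|A_n\|_n^2$). In particular $d(\ell_{w,\varepsilon}(u,t))$ is \emph{not} equal to your stochastic Fubini expression, which is adapted, and the limit of the $u_\varepsilon$ is the adapted integrand $u$, not $d(\ell_w(u,t))$. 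The correct abstract substitute is the one the paper uses: the It\^o isometry for adapted integrands, which converts $L^2$-convergence of $\ell_{w,\varepsilon}(u,t)$ into $L^2([0,t]\times\Omega)$-convergence of $u_\varepsilon$, after which only the pointwise identification above is needed. Strike the fallback paragraph and your proof is sound.
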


\begin{proof}
The Clark theorem asserts that there exist square integrable $w-$adapted processes $(u(r))_{r\in [0,t]},$ $(u_{\varepsilon}(r))_{r\in [0,t]},$ such that
$$
\ell_w(u,t)=\int^t_0 p_r(u)dr + \int^t_0 u(r)dw(r);
$$
$$
\ell_{w,\varepsilon}(u,t)=\int^t_0 p_{r+\varepsilon}(u)dr + \int^t_0 u_{\varepsilon}(r)dw(r).
$$
The $L^2-$convergence  $\ell_{w,\varepsilon}(u,t)\to \ell_{w}(u,t),$ $\varepsilon\to 0,$  implies that 
$$
\mathbb{E}\int^t_0 (u_{\varepsilon}(r)-u(r))^2 dr\to 0, \varepsilon\to 0.
$$
Let us find processes $u_{\varepsilon}.$ As random variables $\ell_{w,\varepsilon}(u,t)$ are stochastically differentiable, the Clark-Ocone formula \cite[Prop.1.3.14]{10} is applicable
$$
u_{\varepsilon}(r)=\mathbb{E}[D(\ell_{w,\varepsilon}(u,t))(r)/\mathcal{F}^w_r],
$$
where $\mathcal{F}^w_r$ is the $\sigma-$filed generated by $w(s),s\leq r.$ Now, the stochastic derivative of $\ell_{w,\varepsilon}(u,t)$ equals
$$
D(\ell_{w,\varepsilon}(u,t))(r)=\int^t_r p'_\varepsilon (w(s)-u)ds.
$$
To calculate its conditional expectation we will use the independence of increments of $w $
$$
\mathbb{E} \left[ D\ell_{w,\varepsilon}(u,t)(r) / \mathcal{F}^w_r \right] = \int^t_r \int_{\mathbb{R}} p'_{\varepsilon }(w(r)-u+y)p_{s-r}(y)dy ds =
$$
$$
=-\int^t_r\int_{\mathbb{R}}  p_\varepsilon (w(r)-u+y)p'_{s-r}(y)dyds.
$$
Consequently, 
$$
u_{\varepsilon}(r)=-\int^t_r\int_{\mathbb{R}}  p_\varepsilon (w(r)-u+y)p'_{s-r}(y)dyds.
$$
Taking the limit $\varepsilon\to 0$ one obtains the needed expression for $u.$
\end{proof}

The following integral representation does not concern only the Wiener process but a large class of integrators. For this, let us consider the integrator $x$ with the representation \eqref{d} where $A$ is a continuously invertible, $\| A\| \leqslant 1 $  and $\xi$ is generated by the Brownian motion $ \displaystyle \lbrace w(t), \; 0\leqslant t \leqslant 1  \rbrace .$  It was established in \cite{7} that $x$ has a local time $\ell(u,t)$ (at the point $u$ up to time $t$ ) which can be defined as a density of the occupation measure, has a continuous modification and satisfies the condition
 $$
 \mathbb{E}\int_{\mbR}\ell(u, 1)^2du<+\infty.
 $$
 Define $\sigma^2(t)=\|A \mathbf{1}_{[0; t]}\|^2=\mathbb{E} x(t)^2, t\in[0; 1].$ Suppose that $\sigma^2$ is nondecreasing and 
the following integral converges
\begin{equation}
\label{20}
\int^t_0 \int^t_r \int^t_r \frac{r^3}{(\sigma^2(u)\sigma^2(v)-\sigma^4(r))^{\frac{3}{2}}}dudvdr<\infty.
\end{equation} 
 Then the following statement holds.
 \begin{thm}
The next equality is true
$$
\ell(u, t)=\int^t_0p_{\sigma^2(s)}(u)ds+\int^t_0\int^t_rp'_{\sigma^2(s)-\sigma^2(r)}(x(r)-u)dsdx(r).
$$
Here the symbol ${}'$ is used for the derivative with respect to a spatial variable and the integral against $dx(t)$ is the extended stochastic integral.
\end{thm}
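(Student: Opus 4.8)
The plan is to prove the formula first for the smooth approximations $\ell_\varepsilon(u,t)=\int_0^t p_\varepsilon(x(s)-u)\,ds$, $\varepsilon>0$, and then pass to the limit $\varepsilon\downarrow 0$, using that $\ell_\varepsilon(u,t)\to\ell(u,t)$ in $L^2$. For fixed $\varepsilon$ the first step is to read off the Wiener chaos expansion of $\ell_\varepsilon(u,t)$ from the computation in the lemma of Section 4, carried out with $\sigma^2(s)$ replaced throughout by $\sigma^2(s)+\varepsilon$: since $\|A\|\le 1$ one has $x=\Gamma(A)w$, and the $n$-th chaos of $\ell_\varepsilon(u,t)$ is $\Gamma(A)$ applied to the (square-integrable) iterated Wiener integral with the same kernel $a_n$ as in Section 4, only $\varepsilon$-regularised. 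To each of these iterated integrals I would apply the classical Clark representation: written as an iterated It\^o integral, its integrand at time $r$ is the explicit adapted process $v^\varepsilon_n(r)=\big(\int_r^t(\sigma^2(s)+\varepsilon)^{-n/2}H_n(\tfrac{u}{\sqrt{\sigma^2(s)+\varepsilon}})p_{\sigma^2(s)+\varepsilon}(u)\,ds\big)\frac{1}{(n-1)!}(\mathbf{1}_{[0,r]})^{\otimes(n-1)}(\xi,\dots,\xi)$, and then invoke Theorem \ref{1} (with $B=A$, $x=\Gamma(A)w$) to turn $\Gamma(A)\int_0^1 v^\varepsilon_n\,dw$ into $\int_0^1(\Gamma(A)v^\varepsilon_n)\,dx$.

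Summing over $n$, the key point is to recognise the series $\sum_{n\ge1}\Gamma(A)v^\varepsilon_n(r)$; running once more the Hermite-polynomial identity of Section 4 (now differentiated once in $u$), whose crucial cancellation is $\sigma^2(r)+(\sigma^2(s)+\varepsilon-\sigma^2(r))=\sigma^2(s)+\varepsilon$ and where nonnegativity of the heat-time is where $\sigma^2$ nondecreasing enters, one obtains
\[
\sum_{n\ge1}\Gamma(A)v^\varepsilon_n(r)=\int_r^t p'_{\sigma^2(s)+\varepsilon-\sigma^2(r)}(x(r)-u)\,ds=:\Psi_\varepsilon(r).
\]
The partial sums over $n$ of the chaos components of $\ell_\varepsilon(u,t)-\mathbb{E}\ell_\varepsilon(u,t)$ equal $\int_0^1\big(\sum_{n\le N}\Gamma(A)v^\varepsilon_n\big)\,dx$ and converge to $\ell_\varepsilon(u,t)-\mathbb{E}\ell_\varepsilon(u,t)$ in $L^2$, while $\sum_{n\le N}\Gamma(A)v^\varepsilon_n\to\Psi_\varepsilon$ in $L^2(\Omega;L_2([0,1]))$; since the extended integral $I_x$ is closed, $\Psi_\varepsilon\in\mathrm{Dom}(I_x)$, and using $\mathbb{E}\ell_\varepsilon(u,t)=\int_0^t p_{\sigma^2(s)+\varepsilon}(u)\,ds$ this yields
\[
\ell_\varepsilon(u,t)=\int_0^t p_{\sigma^2(s)+\varepsilon}(u)\,ds+\int_0^t\Psi_\varepsilon(r)\,dx(r).
\]
Letting $\varepsilon\downarrow 0$: the left side tends to $\ell(u,t)$ in $L^2$; $\int_0^t p_{\sigma^2(s)+\varepsilon}(u)\,ds\to\int_0^t p_{\sigma^2(s)}(u)\,ds$ by dominated convergence (using $\sigma^2(s)\ge\|A^{-1}\|^{-2}s$, i.e.\ \eqref{11}, to dominate near $s=0$); and $\Psi_\varepsilon\to\Psi$ in $L^2(\Omega;L_2([0,1]))$ with $\Psi(r)=\int_r^t p'_{\sigma^2(s)-\sigma^2(r)}(x(r)-u)\,ds$. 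Closedness of $I_x$ once more gives $\Psi\in\mathrm{Dom}(I_x)$ and $\int_0^t\Psi(r)\,dx(r)=\ell(u,t)-\int_0^t p_{\sigma^2(s)}(u)\,ds$, which is exactly the asserted identity.

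The main obstacle, and the only genuinely technical part, is the analytic control that justifies the two convergences just invoked: that $\Psi$ (and $\Psi_\varepsilon$, with bounds uniform in $\varepsilon$) is a bona fide square-integrable $L_2([0,1])$-valued random element, and that $\Psi_\varepsilon\to\Psi$ there. This is precisely the role of hypothesis \eqref{20}: the second moment
\[
\mathbb{E}\!\int_0^t\!\Psi(r)^2\,dr=\int_0^t\!\!\int_r^t\!\!\int_r^t \mathbb{E}\big[p'_{\sigma^2(s_1)-\sigma^2(r)}(x(r)-u)\,p'_{\sigma^2(s_2)-\sigma^2(r)}(x(r)-u)\big]\,ds_1\,ds_2\,dr
\]
has, after an elementary Gaussian evaluation of the inner expectation, a single singular factor $(\sigma^2(s_1)\sigma^2(s_2)-\sigma^4(r))^{-3/2}$, so that finiteness of the whole triple integral is governed by \eqref{20} (the Hermite estimate \eqref{h} together with \eqref{11} serving the same purpose for the chaotic series defining $\Psi_\varepsilon$). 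The remaining manipulations — interchanging $\sum_n$, $\int_r^t ds$ and $\Gamma(A)$, and applying the Clark formula chaos by chaos — are harmless because for each fixed $\varepsilon>0$ the $\varepsilon$-regularisation keeps all the series involved absolutely convergent.
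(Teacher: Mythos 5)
Your proof is correct, but it follows a genuinely different route from the paper's. The paper works on the Fourier side: it writes $\int_{\mathbb{R}}e^{i\lambda u}\ell(u,t)\,du=\int_0^t e^{i\lambda x(s)}\,ds$, obtains from It\^o's formula the exponential-martingale identity $e^{i\lambda w(s)+\lambda^2 s/2}=1+i\lambda\int_0^s e^{i\lambda w(r)+\lambda^2 r/2}\,dw(r)$, transports it to the integrator by applying the second quantization operator together with Theorem \ref{1}, integrates in $s$, interchanges the $ds$- and $dx$-integrals using closedness of the extended integral, and finally inverts the Fourier transform inside the Skorokhod integral. You instead mollify the local time, read off the chaos expansion of $\ell_\varepsilon(u,t)$ from the lemma of Section 4 (with $\sigma^2+\varepsilon$ in place of $\sigma^2$), apply the Clark representation chaos by chaos, transfer each term to a $dx$-integral by the same Theorem \ref{1}, and resum the Mehler series into $\int_r^t p'_{\sigma^2(s)+\varepsilon-\sigma^2(r)}(x(r)-u)\,ds$; this is precisely the alternative derivation the paper announces right after the lemma on condition \eqref{20} (``Theorem 15 can also be derived from the chaotic expansion in Lemma 13, under the condition \eqref{20}'') but never carries out, and your resummation --- with the cancellation $\sigma^2(r)+(\sigma^2(s)+\varepsilon-\sigma^2(r))=\sigma^2(s)+\varepsilon$ --- supplies the missing computation. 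The trade-off: the Fourier route keeps every kernel bounded and smooth in $\lambda$, deferring all the analysis to the final inverse-transform-under-the-integral step, whereas your route concentrates it in the single verification that $\Psi_\varepsilon\to\Psi$ in $L^2(\Omega;L_2([0,1]))$, for which you correctly identify \eqref{20} as the operative hypothesis via the Gaussian second-moment bound with singularity $(\sigma^2(s_1)\sigma^2(s_2)-\sigma^4(r))^{-3/2}$; your version also makes explicit where monotonicity of $\sigma^2$ enters (nonnegativity of the heat time $\sigma^2(s)-\sigma^2(r)$). Both arguments ultimately rest on the same two pillars, Theorem \ref{1} and the closedness of the extended stochastic integral, so the choice between them is a matter of where one prefers to pay the analytic cost.
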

 \begin{proof}
 As it was proved in \cite{7} that the local time $\ell(u, t)$ can be obtained as a density of an occupation measure. Consequently the Fourier transform of $\ell(\cdot, t)$ looks like
  $$
  \int_{\mbR}e^{i\lambda u}\ell(u, t)du=\int^t_0e^{i\lambda x(s)}ds.
  $$
  Note that
  $$
  e^{i\lambda x(s)+\frac{\lambda^2}{2}\sigma^2(s)}=\Gamma(A^*)e^{i\lambda w(s)+\frac{\lambda^2s}{2}}.
  $$
  The last expression can be transformed into
  $$
  \Gamma(A^*)e^{i\lambda w(s)+\frac{\lambda^2s}{2}}=
  \Gamma(A^*)\left( 1+i\lambda\int^s_0e^{i\lambda w(r)+\frac{\lambda^2r}{2}}dw(r)\right) =
  $$
  $$
  =1+i\lambda\int^s_0e^{i\lambda x(r)+\frac{\lambda^2}{2}\sigma^2(r)}dx(r).
  $$
  Consequently,
  $$
  e^{i\lambda x(s)}=e^{-\frac{\lambda^2\sigma^2(s)}{2}}+i\lambda\int^s_0e^{i\lambda x(r)}e^{-\frac{\lambda^2}{2}(\sigma^2(s)-\sigma^2(r))}dx(r).
  $$
  The last integral is the extended stochastic integral, which was defined above. Here the Fourier transform of the local time $\ell(\cdot, t)$ has a form
  $$
  \int_{\mbR}e^{i\lambda u}\ell(u, t)du=\int^t_0e^{-\frac{\lambda^2\sigma^2(s)}{2}}ds+
  $$
  $$
  +i\lambda\int^t_0\int^s_0e^{i\lambda x(r)}e^{=\frac{\lambda^2}{2}(\sigma^2(s)-\sigma^2(r))}dx(r)ds.
  $$
  In the last integral one can change the order of integration due to the properties of the extended stochastic integral (it is a closed linear operator \cite{3, 4} and one can approximate the external integral by the Riemanian sums). Finally
  \begin{equation}
  \label{f}
  \int_{\mbR}e^{i\lambda u}\ell(u, t)du=\int^t_0e^{-\frac{\lambda^2\sigma^2(s)}{2}}ds+
  \end{equation}
  $$
  +
  i\lambda\int^t_0\int^t_r e^{i\lambda x(r)}e^{-\frac{\lambda^2}{2}(\sigma^2(s)-\sigma^2(r))}dsdx(r).
  $$
   Now one can apply the inverse Fourier transform to both sides of \eqref{f} and inside of the integral (using the same arguments as above) and obtain the desired equality.
\end{proof}

Notice that the condition \eqref{20} is sufficient to give a meaning to the extended stochastic integral in the theorem. But as it seems to be a little bit difficult to check, we give, in the following lemma, an easy sufficient condition which implies \eqref{20}.

\begin{lem}
 Assume that 
\begin{enumerate}
\item the function $\sigma^2$ is absolutely continuous;

\item its derivative satisfies
$$
\frac{d\sigma^2(t)}{dt}\geq f(t)
$$
for some nonegative increasing function $f$

\item 
$$
\int^t_0 \bigg(\frac{r}{f(r)}\bigg)^{\frac{3}{2}}dr<\infty.
$$

\end{enumerate}

Then the condition \eqref{20} holds.
\end{lem}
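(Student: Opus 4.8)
The plan is to bound the triple integral in \eqref{20} from above by a quantity that is manifestly finite under the three hypotheses: first produce a clean lower estimate of the denominator, then integrate out $u$ and $v$ explicitly, and only invoke the integrability assumption (3) at the very last step. Throughout I shall use, besides hypotheses (1)--(3), the fact that $A$ is continuously invertible, which via \eqref{11} gives $\sigma^2(r)\ge r/\|A^{-1}\|^2$.

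First I would estimate the denominator. Fix $r\in(0,t)$ and take $u,v\in[r,t]$. Since $\sigma^2$ is nondecreasing, setting $A=\sigma^2(u)-\sigma^2(r)\ge0$ and $B=\sigma^2(v)-\sigma^2(r)\ge0$ gives the identity
$$
\sigma^2(u)\sigma^2(v)-\sigma^4(r)=\sigma^2(r)(A+B)+AB\ \ge\ \sigma^2(r)(A+B),
$$
where I deliberately discard the term $AB$ (the reason for this particular choice is explained below). Using the absolute continuity of $\sigma^2$ from item (1), the bound $\tfrac{d\sigma^2(t)}{dt}\ge f(t)$ from item (2), and the monotonicity of $f$, for $u\ge r$ one has $A=\int_r^u\tfrac{d\sigma^2(s)}{ds}\,ds\ge\int_r^u f(s)\,ds\ge f(r)(u-r)$, and likewise $B\ge f(r)(v-r)$. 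Hence
$$
\sigma^2(u)\sigma^2(v)-\sigma^4(r)\ \ge\ \sigma^2(r)\,f(r)\,\big((u-r)+(v-r)\big),\qquad u,v\in[r,t].
$$

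Next I would integrate over $u$ and $v$. With the change of variables $a=u-r$, $b=v-r$ this yields
$$
\int_r^t\!\int_r^t\frac{du\,dv}{(\sigma^2(u)\sigma^2(v)-\sigma^4(r))^{3/2}}\ \le\ \frac{1}{(\sigma^2(r)f(r))^{3/2}}\int_0^{t-r}\!\int_0^{t-r}\frac{da\,db}{(a+b)^{3/2}},
$$
and the inner double integral is the elementary value $(8-4\sqrt2)\sqrt{t-r}\le(8-4\sqrt2)\sqrt t$. Consequently the left-hand side of \eqref{20} is at most $(8-4\sqrt2)\sqrt t\int_0^t r^3(\sigma^2(r)f(r))^{-3/2}dr$. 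Using $\sigma^2(r)\ge cr$ with $c=\|A^{-1}\|^{-2}$, this is bounded by
$$
\frac{(8-4\sqrt2)\sqrt t}{c^{3/2}}\int_0^t\Big(\frac{r}{f(r)}\Big)^{3/2}dr,
$$
which is finite by hypothesis (3); note the same hypothesis forces $f>0$ on $(0,t]$, so every expression above is legitimate. This completes the proof.

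The one delicate point is the choice of lower bound in the first step, and I expect it to be the main obstacle. Dropping the term $\sigma^2(r)(A+B)$ and keeping only $AB\ge f(r)^2(u-r)(v-r)$ is useless, since $\int_r^t(u-r)^{-3/2}du=\infty$ and the $u$-integration already diverges. On the other hand, the seemingly natural substitution $\tilde u=\sigma^2(u)$, $\tilde v=\sigma^2(v)$ together with $d\tilde u\ge f(r)\,du$ produces a bound of order $(\sigma^2(r)f(r)^2)^{-1}$ for the $(u,v)$-integral, hence an estimate of the form $\int_0^t r^2f(r)^{-2}dr$ for \eqref{20}, and this can diverge even when $\int_0^t(r/f(r))^{3/2}dr<\infty$ (take $f(r)=r^{\gamma}$ with $\gamma\in[\tfrac32,\tfrac53)$). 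Retaining exactly $\sigma^2(r)(A+B)$ and nothing else is precisely what turns the $(u,v)$-integral into the convergent kernel $(a+b)^{-3/2}$ and reproduces the exponent $3/2$ occurring in hypothesis (3).
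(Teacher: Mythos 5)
Your proof is correct and follows essentially the same route as the paper: the key step in both is the lower bound $\sigma^2(u)\sigma^2(v)-\sigma^4(r)\ge \sigma^2(r)\bigl(\sigma^2(u)+\sigma^2(v)-2\sigma^2(r)\bigr)\ge \mathrm{const}\cdot r f(r)(u+v-2r)$, obtained from discarding the product of the two increments, the derivative bound with monotonicity of $f$, and \eqref{11}. Your write-up is in fact more complete than the paper's, which stops at this estimate and leaves the explicit $(u,v)$-integration of $(a+b)^{-3/2}$ to the reader.
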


\begin{proof}
 The denominator in \eqref{20} can be estimated in the following way
$$
\sigma^2(u)\sigma^2(v)-\sigma^4(r)\geq \sigma^2(r)(\sigma^2(u)+\sigma^2(v)-2\sigma^2(r))\geq
$$
$$
\geq \mbox{const} \ r f(r) (u+v-2r).
$$
Now it is clear that condition 3 of the lemma implies \eqref{20}.
\end{proof}

It is important to state that Theorem 15 can also be derived from the chaotic expansion in Lemma 13, under the condition \eqref{20}.

Finally, as we stated in the first section the existence of the minimal norm integral representation, we will now apply it for the local time 
$$
\ell(u,t)=\int^t_0 \delta_u(x(r))dr
$$
of the integrator $(x(t))_{t\in [0,1]}.$ Recall that $x(t)=(A\mathbf{1}_{[0,t]},\xi),$ where $\xi$ is a white noise in $L_2([0,1])$ and $A$ is a continuous and continuously invertible linear operator in $L_2([0,1]).$ The local time $\ell(u,t)$ is the following $L^2-$limit
$$
\ell(u,t)=L^2-\lim _{\varepsilon \to 0}\ell_{\varepsilon}(u,t),
$$
where $\displaystyle \ell_{\varepsilon}(u,t)=\int^t_0 p_\varepsilon(x(r)-u)dr.$ As before, we will use the notation $\sigma^2(t)=\| A\mathbf{1}_{[0,t]} \|^2.$ 

\begin{thm}
\label{min_norm_representation}
For the local time $\ell(u,t)$ of the integrator $x$ one has
$$
d(\ell(u,t))=A\bigg(1_{\cdot<t}\int^t_\cdot\frac{1}{\sigma^2(r)} e^{\frac{x(r)^2-u^2}{2\sigma^2(r)}}\bigg(\Phi\bigg(\frac{x(r)}{\sigma(r)}\bigg)-1_{x(r)>u}\bigg) dr \bigg).
$$

Respectively, the minimal norm integral representation of $\ell(u,t)$ is
$$
\ell(u,t)=\int^t_0 p_{\sigma^2(s)}(u)ds + \int^t_0\int^t_s\frac{1}{\sigma^2(r)} e^{\frac{x(r)^2-u^2}{2\sigma^2(r)}}\bigg(\Phi\bigg(\frac{x(r)}{\sigma(r)}\bigg)-1_{x(r)>u}\bigg) dr dx(s).
$$

\end{thm}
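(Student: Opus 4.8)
The plan is to reduce everything to a computation of $d(\ell(u,t))$: by Lemma~\ref{lem2} the minimal norm integrand $u(\ell(u,t))$ equals $d(\ell(u,t))$, and once $d(\ell(u,t))$ is known in the form $A(\,\cdot\,)$, Lemma~\ref{lem1} together with the defining identity $\int_0^1 y(s)\,dx(s)=I(Ay)$ for the extended stochastic integral with respect to $x$ gives the integral representation. To compute $d(\ell(u,t))$ I would work with the regularizations $\ell_\varepsilon(u,t)=\int_0^t p_\varepsilon(x(r)-u)\,dr$, evaluate $d$ on them, and then let $\varepsilon\to0$, exploiting that $d$ is a bounded operator.

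First I would treat $\ell_\varepsilon(u,t)$. For each fixed $r$ the random variable $p_\varepsilon(x(r)-u)$ has the form $f((g,\xi))$ with $g=A\mathbf{1}_{[0,r]}$, $|g|^2=\sigma^2(r)$ and $f(z)=p_\varepsilon(z-u)$, so Lemma~\ref{lem5} applies and gives
$$d\big(p_\varepsilon(x(r)-u)\big)=g_\varepsilon(r)\,A\mathbf{1}_{[0,r]},\qquad g_\varepsilon(r)=\int_{\R}p_\varepsilon(y-u)\,\tfrac{1}{\sigma^2(r)}\,e^{\frac{x(r)^2-y^2}{2\sigma^2(r)}}\big(\Phi(\tfrac{x(r)}{\sigma(r)})-\mathbf{1}_{x(r)>y}\big)\,dy.$$
Since $r\mapsto p_\varepsilon(x(r)-u)$ is continuous into $L^2(\Omega,\mathcal F,\mathbb P)$ and uniformly bounded by $(2\pi\varepsilon)^{-1/2}$, it is Bochner integrable on $[0,t]$, and the bounded operators $d$ and $A$ (the latter acting on the $L_2([0,1])$-component) commute with $\int_0^t\cdot\,dr$. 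Using $\int_0^t h(r)\mathbf{1}_{[0,r]}(\cdot)\,dr=\mathbf{1}_{\cdot<t}\int_\cdot^t h(r)\,dr$ this yields
$$d(\ell_\varepsilon(u,t))=A\Big(\mathbf{1}_{\cdot<t}\int_\cdot^t g_\varepsilon(r)\,dr\Big).$$

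Next I would pass to the limit. On one side $\ell_\varepsilon(u,t)\to\ell(u,t)$ in $L^2$, hence $d(\ell_\varepsilon(u,t))\to d(\ell(u,t))$ in $L^2(\Omega,\mathcal F,\mathbb P;L_2([0,1]))$ by boundedness of $d$. On the other side I would show that $\mathbf{1}_{\cdot<t}\int_\cdot^t g_\varepsilon(r)\,dr$ converges in the same space to $\mathbf{1}_{\cdot<t}\int_\cdot^t g_0(r)\,dr$ with $g_0(r)=\tfrac{1}{\sigma^2(r)}e^{\frac{x(r)^2-u^2}{2\sigma^2(r)}}\big(\Phi(\tfrac{x(r)}{\sigma(r)})-\mathbf{1}_{x(r)>u}\big)$; applying the bounded operator $A$ then identifies $d(\ell(u,t))$ with the announced expression. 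To get the convergence of the $g_\varepsilon$'s, note that after the substitution $z=x(r)/\sigma(r)$, $w=y/\sigma(r)$ the inner kernel becomes $\Delta_w(z)$ in the notation of the proof of Lemma~\ref{lem5}, whose explicit evaluation there shows $|\Delta_w(z)|\le1$ for all $z,w$; hence $|g_\varepsilon(r)|\le\sigma^{-2}(r)\le\|A^{-1}\|^2/r$ by \eqref{11}. Since $p_\varepsilon(\cdot-u)$ is an approximate identity and a.s. $x(r)\ne u$ (so that $y\mapsto\sigma^{-2}(r)e^{(x(r)^2-y^2)/2\sigma^2(r)}(\Phi(x(r)/\sigma(r))-\mathbf{1}_{x(r)>y})$ is continuous at $y=u$), one gets $g_\varepsilon(r)\to g_0(r)$ a.s. for every $r\in(0,t]$, and the deterministic envelope $\big|\mathbf{1}_{s<t}\int_s^t g_\varepsilon(r)\,dr\big|\le\|A^{-1}\|^2\ln(t/s)\in L_2([0,t])$ permits dominated convergence in $(s,\omega)$. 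Finally $\mathbb E\ell_\varepsilon(u,t)=\int_0^t p_{\varepsilon+\sigma^2(r)}(u)\,dr\to\int_0^t p_{\sigma^2(r)}(u)\,dr=\mathbb E\ell(u,t)$, so Lemma~\ref{lem1} turns $\ell(u,t)=\mathbb E\ell(u,t)+I(d(\ell(u,t)))$ into the stated minimal norm integral representation via $I(Ay)=\int_0^1 y(s)\,dx(s)$.

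The hard part will be the limiting argument just described: one must dominate the $\varepsilon$-family uniformly, which rests on the boundedness of the function $\Delta_\cdot(\cdot)$ extracted in the proof of Lemma~\ref{lem5} --- precisely the cancellation that keeps the growing factor $e^{x(r)^2/2\sigma^2(r)}$ under control --- together with the integrable singularity $\sigma^{-2}(r)=O(1/r)$ near $r=0$ supplied by \eqref{11}. A more computational alternative would be to obtain $d(\ell(u,t))$ directly from the chaotic expansion of $\ell(u,t)$ combined with the resolvent formula for $d$ in Lemma~\ref{lem4}, but the approximation route above looks cleaner.
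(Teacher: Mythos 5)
Your proposal follows essentially the same route as the paper: regularize by $\ell_\varepsilon(u,t)=\int_0^t p_\varepsilon(x(r)-u)\,dr$, apply Lemma~\ref{lem5} pointwise in $r$ with $g=A\mathbf{1}_{[0,r]}$, commute $d$ and $A$ with the $dr$-integral, and let $\varepsilon\to 0$ using continuity of $d$. Your domination argument for the limit (boundedness of $\Delta_\cdot(\cdot)$ plus $\sigma^{-2}(r)=O(1/r)$ from \eqref{11}) is a correct filling-in of a step the paper passes over with "taking the limit one obtains the needed expression."
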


\begin{proof}

The operator $d$ is continuous, so 
$$
d(\ell(u,t))=\lim_{\varepsilon\to 0} d(\ell_{\varepsilon}(u,t))=\lim_{\varepsilon\to 0}\int^t_0 d(p_\varepsilon(x(r)-u))dr.
$$According to the lemma \ref{lem5}
$$
d(p_\varepsilon(x(r)-u))=d(p_\varepsilon((A1_{[0,r]},\xi)-u))=
$$
$$
=\bigg(\int_{\mathbb{R}}p_\varepsilon(y-u)\frac{1}{\sigma^2(r)}e^{\frac{x(r)^2-y^2}{2\sigma^2(r)}}\bigg(\Phi\bigg(\frac{x(r)}{\sigma(r)}\bigg)-1_{x(r)>y}\bigg)dy\bigg)A1_{[0,r]}.
$$
Consequently,
$$
d(\ell_{\varepsilon}(u,t))=\int^t_0 \bigg(\int_{\mathbb{R}}p_\varepsilon(y-u)\frac{1}{\sigma^2(r)}e^{\frac{x(r)^2-y^2}{2\sigma^2(r)}}\bigg(\Phi\bigg(\frac{x(r)}{\sigma(r)}\bigg)-1_{x(r)>y}\bigg)dy\bigg)A1_{[0,r]} dr=
$$
$$
=A\bigg(1_{\cdot<t}\int^t_\cdot\frac{1}{\sigma^2(r)}  \int_{\mathbb{R}}p_\varepsilon(y-u)e^{\frac{x(r)^2-y^2}{2\sigma^2(r)}}\bigg(\Phi\bigg(\frac{x(r)}{\sigma(r)}\bigg)-1_{x(r)>y}\bigg)dy dr \bigg).
$$
Taking the limit $\varepsilon\to 0$ one obtains the needed expression
$$
d(\ell(u,t))=A\bigg(1_{\cdot<t}\int^t_\cdot\frac{1}{\sigma^2(r)} e^{\frac{x(r)^2-u^2}{2\sigma^2(r)}}\bigg(\Phi\bigg(\frac{x(r)}{\sigma(r)}\bigg)-1_{x(r)>u}\bigg) dr \bigg).
$$
The theorem is proved.

\end{proof}

\end{document}